\newtheorem{theorem}{Theorem}[section]
\newtheorem{lemma}[theorem]{Lemma}
\newtheorem{corollary}[theorem]{Corollary}
\theoremstyle{definition}
\newtheorem{definition}{Definition}
\theoremstyle{remark}
\newtheorem{remark}[theorem]{Remark}
\title{A New Approach to Proper Orthogonal Decomposition with Difference Quotients}
\begin{document}
	
	\author{Sarah K.~Locke%
		\thanks{Department of Mathematics and Statistics, Missouri University of Science and Technology, Rolla, MO (\mbox{sld77@mst.edu}, \mbox{singlerj@mst.edu}).}
		\and
		John~R.~Singler%
		\footnotemark[1]
	}
	\maketitle
	
	\begin{abstract}
		In a recent work [B.\ Koc et al., arXiv:2010.03750, SIAM J.\ Numer.\ Anal., to appear], the authors showed that including difference quotients (DQs) is necessary in order to prove optimal pointwise in time error bounds for proper orthogonal decomposition (POD) reduced order models of the heat equation. In this work, we introduce a new approach to including DQs in the POD procedure. Instead of computing the POD modes using all of the snapshot data and DQs, we only use the first snapshot along with all of the DQs and special POD weights. We show that this approach retains all of the numerical analysis benefits of the standard POD DQ approach, while using a POD data set that has half the number of snapshots as the standard POD DQ approach, i.e., the new approach is more computationally efficient. We illustrate our theoretical results with numerical experiments.
	\end{abstract}
	
	\textbf{Keywords:}  proper orthogonal decomposition, projections, approximation theory, difference quotients, reduced order models
	
\section{Introduction}

A common model order reduction technique used for approximating partial differential equations (PDEs) and other mathematical models is proper orthogonal decomposition (POD). With this method, data from simulations or experiments are used to create modes, which are then used in a projection method to create a reduced order model (ROM). Because of the efficiency of POD ROMs, they are used in many applications including control theory and fluid dynamics to solve computationally demanding problems. For a small selection of applications see, e.g., \cite{Nguyen17, JinZhou17, ROWLEY2005, Bergmann05, Willcox02, Higham2020, Kunisch08, AllaFalconeVolkwein17,Rubino18, IliescuWang13, Graessle2019, Koc2019, Karasoezen2018}. 

Because of the widespread use of POD in applications, many researchers have studied POD ROMs from a numerical analysis perspective; see, e.g., \cite{Kunisch2001, KunischVolkwein02, LuoChenNavonYang08, IliescuWang14, IliescuWang14_DQ, Chapelle12, Rathinam2003,Singler14,LockeSingler20}. In order for POD to be beneficial for applications, researchers must understand how the various errors involved behave. To fully understand this for PDEs, three types of error must be considered: spatial discretization error, time discretization error, and ROM discretization error. The optimality of these errors is of particular concern. POD numerical analysis papers tend to focus on the time discretization error and the ROM discretization error since the spatial discretization error can typically be handled using existing techniques. For more information on these three types of error and the numerical analysis of POD, see the introduction of the recent work \cite{Koc2020}. The current work focuses only on the POD ROM errors and leads to an improved understanding of the numerical analysis of POD ROMs, particularly in regards to difference quotients and pointwise error bounds. 

There are multiple approaches to creating POD modes from the data. In this work, two of the most common existing methods are considered, and we introduce a new method. The first existing approach is a standard method to compute the POD modes and uses only the data, and the second existing approach utilizes both the data and the difference quotients (DQs) of the data. Researchers originally started including DQs in the POD calculations to improve the numerical analysis results for POD reduced order models as in \cite{Kunisch2001}. Further, DQs have been used in many applications including \cite{Hoemberg2003, Hijazi2020, Herkt2013, JinZhou17, KeanSchneier19, Kostova-VassilevskaOxberry18, Leibfritz2007, Sachs2013, ZhuDedeQuarteroni17}.  For years, researchers have been curious about the role DQs play in the behavior of the ROM and whether or not they should be included in the POD computations. In general, results on this topic were inconclusive. However in 2014, substantial progress was made by Iliescu and Wang in \cite{IliescuWang14_DQ} towards answering this question. In this work a notion of optimality was introduced and results strongly suggested that DQs were needed to achieve optimal pointwise-in-time convergence rates. Recently in \cite{Koc2020}, further progress was made. In this work it is shown that a critical assumption often made when using standard POD without DQs is automatically guaranteed to be satisfied when DQs are included with the data. The notion of optimality introduced in \cite{IliescuWang14_DQ} is extended, and it is shown that including difference quotients results in in pointwise POD projection error bounds and optimal pointwise ROM errors. The goal of the current work is to further investigate and understand pointwise error bounds in the POD-ROM setting.

To do this, we introduce a new approach to deriving POD modes from the data. When using all of the data with all of the DQs, the resulting data set is linearly dependent, i.e. the data set being used contains redundant information. This also leads to more costly POD basis computations compared to standard POD without DQs. In order to improve this situation, we consider the following question: \textit{Can we obtain all of the same numerical analysis benefits of using DQs with POD using a data set without redundancy?} We show that the answer is yes, if we choose the data set and POD weights in a correct way. For our new approach we use only the first data snapshot and all of the difference quotients. This new approach to using DQs with POD uses a data set without redundancy in the following sense: if the original set of $ M $ snapshots is linearly independent, then the data set used in our new DQ approach has dimension $ M $ and is also linearly independent. Using this new collection of data and special POD weights, we are not only able to approximate the DQs and the one regular snapshot, but all of the other regular snapshot data as well. With this method we also prove that we retain the numerical analysis benefits that come with having the DQs in the POD data set. 

The rest of the work is organized as follows. First we introduce the details for both current approaches to POD in \Cref{sec:standardPOD}. Then we describe our new computational approach in \Cref{sec:New_POD_DQ} and also prove POD data approximation error formulas, present preliminary computations, and prove pointwise POD data approximation error results. This is followed by reduced order model error analysis in \Cref{sec:ROM} and results from more detailed numerical computations in \Cref{sec:rom_computations}.

\section{Proper Orthogonal Decomposition}\label{sec:standardPOD}

In this section we introduce current approaches to POD: the standard POD approach and standard POD with difference quotients approach. We compare our new method to known results about these established methods throughout the work. For details on the basics of POD see, e.g., \cite{Djouadi08,GubischVolkwein17,HolmesLumleyBerkoozRowley12,KunischVolkwein02,Volkwein04,QuarteroniManzoniNegri16,Liang02}.

First we establish some general notation. Let $M$ and $N$ be a positive integers and $X$ and $Y$ be Hilbert spaces where the space $X$ is called the POD space. It is possible for $Y = X$. 
In the examples in this work for these Hilbert spaces we use the standard function spaces $L^2(\Omega)$ and $H_0^1(\Omega)$, where $\Omega$ is the spatial domain. Further, let $\mathbb{K} = \mathbb{R}$ or $\mathbb{K} = \mathbb{C}$. In order to consider variable weights that often arise with numerical integration, we define $S := \mathbb{K}_\Gamma^M$ with the weighted inner product given by
$$(g,h)_S = h^*\Gamma g = \sum_{j=1}^{M} \gamma_j g^j \overline{h^j}$$
where $g,h \in S$, $\Gamma = \text{diag}(\gamma_1, \gamma_2, \ldots , \gamma_M)$, and the values $\{\gamma_j\}_{j=1}^M$ are positive weights. In some instances it is beneficial to take the the positive weights to be certain specific values in order to approximate various time integrals. 

For the POD reduced order modeling in this work, we consider data sets consisting of approximate solution data for a time dependent partial differential equation. Throughout, we consider the time interval $[0,T]$ with $T > 0$ a fixed positive constant. The approximate solution data will be given at times $t_n = (n-1) \Delta t$, for $n = 1, \ldots, N$, where the time step is given by $\Delta t = \frac{T}{N-1}$. Note that while $T$ is fixed, $N$ can vary.

This work also uses projections. For a normed space $Z$, a bounded linear operator $\Pi:Z \to Z$ is a \textit{projection} onto $Z_r \subset Z$ if $\Pi^2 = \Pi$ and the range of $\Pi = Z_r$. Then $\Pi z = z$ for any $z \in Z_r$. Note that we do not assume that a projection is orthogonal unless explicitly stated. 

Further details and a selection of known results for the standard POD approach and the standard approach to POD including difference quotients can be found in \Cref{ssec:standardpod} and \Cref{ssec:POD_DQ} respectively. 

\subsection{Standard POD}\label{ssec:standardpod}
	
First we introduce the standard POD problem and operator. For this POD problem difference quotients are not considered. Let $W = \{w^j\}_{j=1}^M \subset X$ be the POD data, also called the POD snapshots, for some integer $M>0$. Given $r>0$, the standard POD problem is to find an orthonormal basis $\{\varphi_j\}_{j=1}^M \subset X$, also called the POD basis, minimizing the data approximation error 
\begin{equation}\label{eqn:data_approx_error_standardPOD}
E_r = \sum_{j=1}^M \gamma_j \| w^j - \Pi_r^X w^j \|_X^2 
\end{equation}
where $ \Pi_r^X : X \to X $ is the orthogonal projection onto $ X_r = \mathrm{span}\{ \varphi_k \}_{k=1}^r  $ defined by
\begin{equation}\label{Pidef}
\Pi_r^X x = \sum_{k=1}^r (x,\varphi_k)_X \varphi_k.
\end{equation}
The operator that provides the solution to this problem is $ K : S \to X $ which is given by
\begin{equation}\label{XPODdiscrete}
K f = \sum_{j=1}^M \gamma_j f^j \, w^j,  \quad  f = [ f^1, f^2, \ldots, f^M ]^T.
\end{equation}
We call this operator the standard POD operator. This operator $K$ is compact and has a singular value decomposition given by $ \{ \lambda_k^{1/2}, f_k, \varphi_k \} \subset \mathbb{R} \times S \times X $ where $
\{\lambda_k^{1/2}\}$ are the singular values and $\{f_k\}$ and $\{\varphi_k \}$ are the orthonormal singular vectors. The singular vectors $ \{ \varphi_k \} $ are called the POD modes of the data $ \{ w^k \} \subset X $ and $\{ \lambda_k^{1/2} \}$ are called the POD singular values. For more information about the singular value decomposition of compact operators, see, e.g., \cite[Chapters VI--VIII]{GohbergGoldbergKaashoek90}, \cite[Section V.2.3]{Kato95}, \cite[Chapter 30]{Lax02}, \cite[Sections VI.5--VI.6]{ReedSimon80}.

%For a positive integer $ r $, define $ X_r $. i.e., for $ x \in X $ fixed, $ x_r = \Pi_r^X x $ minimizes the approximation error $ \| x - x_r \|_X $ for $ x_r \in X_r $.  Also, since $ \{ \varphi_k \} $ is an orthonormal set in $ X $, we have the exact representation
%

%
The POD modes provide the best low rank approximation to the data, and it is known that
\begin{equation}\label{knownPODerror}
E_r = \sum_{j=1}^M \gamma_j \| w^j - \Pi_r^X w^j \|_X^2  =  \sum_{k = r+1}^s  \lambda_k
\end{equation}
where $\{\lambda_k\} $ are the POD eigenvalues and $s$ is the number of positive POD singular values. 
%The POD eigenvalues are the eigenvalues of $KK^*$ where $K^*: X \to S$ is the adjoint operator of $K:S \to X$ and is defined by 
%\begin{equation}
%K^*x = [(x,w_1)_X, (x,w_2)_X, \ldots  (x, w_s)_X]^T.
%\end{equation}
%
%When $ \lambda_k > 0 $, we have
%$$
%K f^k = \lambda_k^{1/2} \varphi_k.
%$$
%Thus we get the following formula for the POD modes
%\begin{equation}
%\varphi_k = \lambda_k^{-1/2} \sum_{j=1}^N \gamma_j (f^k)_j \, w^j
%\end{equation}
%which is used in numerical computations.
%\blue{add stuff about other standard POD details}

%The following lemma and proof are provided for completeness to show the error formulas for this POD problem using a result from our previous work \cite{LockeSingler20}.

For certain choices of weights $\{\gamma_i\}$ the error given in \Cref{eqn:data_approx_error_standardPOD} approximates a time integral, or a constant multiple of a time integral, as more and more time steps are used. Using various quadrature rules to determine the appropriate POD weights will lead to different time integral approximations. Allowing the weights to vary for the standard POD problem will also allow us to apply known results for this approach to new approaches.

The following lemma provides exact formulas for POD data approximation errors using other norms and other projections. We use this result later to provide POD data approximation results for other POD approaches. The proof of \Cref{POD_error} is similar to existing proofs of closely related results that exist in the literature so we omit the proof here; see,  e.g. \cite[Lemma 2.2]{Koc2020}, \cite[Theorem 5.1]{LockeSingler20}.  
\begin{lemma}\label{POD_error} 
	Let $ X_r = \mathrm{span}\{ \varphi_k \}_{k=1}^r $ and $ \Pi_r^X : X \to X $ be the orthogonal projection onto $ X_r $. Let $s$ be the number of positive POD singular values for $K$ defined in \Cref{XPODdiscrete}. If Y is a Hilbert space with $W \subset Y$ then
	\begin{equation}
	\sum_{j=1}^{M} \gamma_j \|w^j-\Pi_r^X w^j \|_Y^2  = \sum_{i=r+1}^{s} \lambda_i \|\varphi_i\|_Y^2.
	\end{equation}
	 In addition if $\pi_r : Y \to Y$ is a bounded linear projection onto $X_r$ then, 
	\begin{equation}
	\sum_{j=1}^{M} \gamma_j \|w^j-\pi_r w^j \|_Y^2  = \sum_{i=r+1}^{s} \lambda_i \|\varphi_i - \pi_r \varphi_i\|_Y^2.
	\end{equation}
\end{lemma}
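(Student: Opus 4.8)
The plan is to expand each snapshot $w^j$ in the POD modes using the singular value decomposition of $K$, and then to reduce both error sums to a single diagonal sum by exploiting the $S$-orthonormality of the singular vectors $\{f_k\}$. First I would record the two defining SVD relations $K f_k = \lambda_k^{1/2}\varphi_k$ and $K^*\varphi_k = \lambda_k^{1/2} f_k$, and compute the adjoint $K^*:X\to S$ directly from \eqref{XPODdiscrete}; a short calculation gives $(K^*x)^j = (x,w^j)_X$. Applying $K$ to the $j$-th standard basis vector $e_j\in S$ gives $Ke_j=\gamma_j w^j$ on one hand, while expanding $e_j$ against the orthonormal system $\{f_k\}$ gives $Ke_j=\sum_{k=1}^s\lambda_k^{1/2}(e_j,f_k)_S\varphi_k$ on the other. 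Since $(e_j,f_k)_S=\gamma_j\overline{f_k^j}$, dividing by $\gamma_j$ yields the key expansion
\begin{equation}
w^j=\sum_{k=1}^s\lambda_k^{1/2}\,\overline{f_k^j}\,\varphi_k,
\end{equation}
where $f_k^j$ denotes the $j$-th component of $f_k$. This is legitimate because $w^j=\gamma_j^{-1}Ke_j$ lies in the range of $K$, which is spanned by $\{\varphi_k\}_{k=1}^s$.

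Because $\{\varphi_k\}$ is orthonormal in $X$, the orthogonal projection $\Pi_r^X$ simply truncates this expansion, so that $w^j-\Pi_r^Xw^j=\sum_{k=r+1}^s\lambda_k^{1/2}\overline{f_k^j}\varphi_k$. I would then expand the $Y$-inner product of this tail with itself, sum over $j$ against the weights $\gamma_j$, and interchange the order of summation to obtain
\begin{equation}
\sum_{j=1}^M\gamma_j\|w^j-\Pi_r^Xw^j\|_Y^2=\sum_{k,l=r+1}^s\lambda_k^{1/2}\lambda_l^{1/2}(\varphi_k,\varphi_l)_Y\sum_{j=1}^M\gamma_j\,f_l^j\,\overline{f_k^j}.
\end{equation}
The inner sum is exactly $(f_l,f_k)_S=\delta_{kl}$, which collapses the double sum to its diagonal and gives the first identity $\sum_{k=r+1}^s\lambda_k\|\varphi_k\|_Y^2$.

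For the second identity I would apply the general bounded projection $\pi_r$ to the same expansion. Since $\pi_r$ fixes $X_r=\mathrm{span}\{\varphi_k\}_{k=1}^r$, we have $\pi_r\varphi_k=\varphi_k$ for $k\le r$, so those terms cancel and $w^j-\pi_rw^j=\sum_{k=r+1}^s\lambda_k^{1/2}\overline{f_k^j}(\varphi_k-\pi_r\varphi_k)$. Repeating the norm expansion verbatim with $\varphi_k$ replaced by $\varphi_k-\pi_r\varphi_k$, and again using $(f_l,f_k)_S=\delta_{kl}$, yields $\sum_{k=r+1}^s\lambda_k\|\varphi_k-\pi_r\varphi_k\|_Y^2$, as claimed.

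I expect the only delicate points to be bookkeeping rather than conceptual: keeping the complex conjugates consistent across the weighted inner product $(\cdot,\cdot)_S$ and the $Y$-inner product, and justifying that each snapshot lies in $\mathrm{span}\{\varphi_k\}_{k=1}^s$ so that the truncated expansion for $\Pi_r^Xw^j$ is exact. The essential mechanism is that this projection is defined through the $X$-inner product but the error is measured in the $Y$-norm, yet the computation still diagonalizes; the reason is entirely the $S$-orthonormality of the singular vectors $\{f_k\}$. Hence the main thing to get right is that this orthonormality survives the weights $\gamma_j$, i.e.\ that $\sum_j\gamma_j f_l^j\overline{f_k^j}=(f_l,f_k)_S=\delta_{kl}$.
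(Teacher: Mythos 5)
Your proof is correct: the expansion $w^j=\sum_{k=1}^s\lambda_k^{1/2}\overline{f_k^j}\varphi_k$ obtained from $Ke_j=\gamma_j w^j$ is valid (the snapshots lie in the range of $K$), the conjugate bookkeeping in the weighted inner product $(\cdot,\cdot)_S$ is handled consistently, and the collapse of the double sum via $(f_l,f_k)_S=\delta_{kl}$ gives both identities, with the $\pi_r$ case needing only linearity and the fact that $\pi_r$ fixes $X_r$. The paper itself omits the proof and refers to closely related results in the literature (Lemma 2.2 of Koc et al.\ and Theorem 5.1 of Locke--Singler), whose proofs use exactly this mechanism --- expanding the snapshots in the singular vectors and exploiting the $S$-orthonormality --- so your argument is essentially the standard one the paper has in mind.
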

%
  
%\begin{proof}
%	For this proof we will show that all conditions are met to satisfy Theorem 5.1 from \cite{LockeSingler20}. First, define $L:X \to Y$ such that $Lv = v$ for all $v \in W$. Then $\mathcal{D}(L)=\mathcal{R}(L) = W$. Also note that provided $\lambda_r >0$, we know $\varphi_k \in \mathcal{D}(L)$ and $X_r \subset \mathcal{D}(L) = W$. Together with the assumptions, we have $w^j \in \mathcal{D}(L)$, $w^j \in \mathcal{D}(\Pi_r^X)$ since $W\subset X$, and $w^j \in \mathcal{D}(\Pi_r^Y)$ since $W\subset Y$. We also know $\Pi_r^Y w^j \in \mathcal{D}(L^{-1})$ since $\mathcal{R}(\Pi_r^Y) \subset X_r \subset \mathcal{D}(L)$ and $L\varphi_k \in \mathcal{D}(\Pi_r^Y)$ since $\mathcal{R}(L) \subset W \subset Y =\mathcal{D}(\Pi_r^Y)$.	
%	Thus, all conditions are met for Theorem 5.1 and that completes the proof of this special case. 
%\end{proof}

The standard POD approach does not have general bounds for pointwise errors, as shown in \cite[Section 3]{Koc2020}.

\subsection{POD with Difference Quotients}\label{ssec:POD_DQ}

Another common approach to POD involves the use of difference quotients. Throughout this work, we refer to this method as the standard DQ approach. This approach has been studied by many including \cite{ HinzeVolkwein, Kunisch2001, Kostova-VassilevskaOxberry18, IliescuWang14_DQ, Koc2020, Gu2021}. In this work we consider backward Euler for the time stepping scheme and the difference quotients. 

Let $U = \{u^j\}_{j=1}^N \subset X$ be a given data set. Then the problem is to find an orthonormal basis minimizing the error
\begin{equation}\label{POD_DQ_problem}
E_r^{DQ} = \sum_{j=1}^{N} \Delta t \|u^j-\Pi_r^X u^j \|_X^2 + \sum_{j=1}^{N-1} \Delta t \|\partial u^j - \Pi_r^X \partial u^j \|_X^2
\end{equation} where $\Delta t$ is the time step and the difference quotients are given by 
\begin{equation}\label{diffquotient}
\partial u^j = \frac{u^{j+1}- u^j}{\Delta t}.
\end{equation}
These difference quotients approximate the time derivative of the data in continuous time. The operator that provides the minimizing basis for the error in \Cref{POD_DQ_problem} is $K_{DQ} : S \to X$ defined by 
\begin{equation}\label{POD_DQ_operator}
K_{DQ} f = \sum_{j=1}^{N} \Delta t f^{j} u^j + \sum_{j=1}^{N-1} \Delta t f^{N+j} \partial u^j.
\end{equation}

This approach uses a total of $M = 2N-1$ data snapshots which is nearly twice as many as the standard POD approach. Note for this operator we have $K_{DQ} f = Kf $ where $w^i = u^i$ and $\gamma_i = \Delta t$ for $i = 1, \ldots  N$, and also $w^{N+i} = \partial u^i$ and $\gamma_{N+i} = \Delta t$ for $i = 1,\ldots ,N-1$. The resulting POD data set is $\{w^j\}_{j=1}^{M}$, where $M = 2N-1$. Taking $\{\lambda_j^{DQ}\}_{j=1}^{2N-1}$ to be the POD eigenvalues and keeping the same notation $\{\varphi_j\}$ for the POD basis functions, we get similar results to those for the standard POD operator. When using this new set $\{w^j\}$ as the POD data set, we not only have a set that is nearly twice as large as the original but it can also be checked that the new set is linearly dependent. This redundancy is something we avoid with our new approach introduced in \Cref{sec:New_POD_DQ}.
%the following formula for the POD modes
%\begin{equation*}
%\varphi_k = (\lambda^{DQ}_k)^{-1/2} K_{DQ} f^k
%\end{equation*}
%and 

\begin{remark}
	While the weights can be taken to be any set of positive constants, for simplicity in this work we take them to be the constant $\Delta t$. The results in this work can be extended to variable weights or other choices of constant weights. One popular choice of constant weight in the literature is $M^{-1}$ as in \cite{Koc2020,IliescuWang14_DQ} where $M$ represents the total number of data snapshots for the standard difference quotient approach to POD. Similarly to the standard POD case, with certain choices of weights, one can approximate time integrals with various quadrature rules.
\end{remark}

%\blue{compare to standard approach. including how the POD modes (formula) compare, etc} 

%\blue{add in all the other POD details as well}

The following result is also similar to Lemma 2.4 in \cite{Koc2020}. We provide it here for completeness. 

\begin{lemma}\label{POD_error_DQ} 
	Let $ X_r = \mathrm{span}\{ \varphi_k \}_{k=1}^r $ and $ \Pi_r^X : X \to X $ be the orthogonal projection onto $ X_r $. Let $s$ be the number of positive POD singular values for $K_{DQ}$ defined in \Cref{POD_DQ_operator}. We have the following error formula:
	\begin{equation}
	\sum_{j=1}^{N} \Delta t \|u^j-\Pi_r^X u^j \|_X^2 + \sum_{j=1}^{N-1} \Delta t \|\partial u^j - \Pi_r^X \partial u^j \|_X^2 = \sum_{i=r+1}^{s} \lambda_i^{DQ}.
	\end{equation}
	If Y is a Hilbert space with $U \subset Y$ then
	\begin{equation}
	\sum_{j=1}^{N} \Delta t \|u^j-\Pi_r^X u^j \|_Y^2 + \sum_{j=1}^{N-1} \Delta t \|\partial u^j - \Pi_r^X \partial u^j \|_Y^2  = \sum_{i=r+1}^{s} \lambda_i^{DQ} \|\varphi_i\|_Y^2.
	\end{equation}
	In addition, if $\pi_r : Y \to Y$ is a bounded linear projection onto $X_r$ then
%	\begin{equation}
%	\sum_{j=1}^{N} \Delta t \|u^j-\Pi_r^Y u^j \|_X^2 + \sum_{j=1}^{N-1} \Delta t \|\partial u^j - \Pi_r^Y \partial u^j \|_X^2  = \sum_{i=r+1}^{s} \lambda_i^{DQ} \|\varphi_i - \Pi_r^Y \varphi_i\|_X^2.
%	\end{equation}
%	and
	\begin{equation}
	\sum_{j=1}^{N} \Delta t \|u^j-\pi_r u^j \|_Y^2 + \sum_{j=1}^{N-1} \Delta t \|\partial u^j - \pi_r \partial u^j \|_Y^2  = \sum_{i=r+1}^{s} \lambda_i^{DQ} \|\varphi_i - \pi_r \varphi_i\|_Y^2.
	\end{equation}
\end{lemma}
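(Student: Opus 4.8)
The plan is to recognize that this statement is not really an independent result but a direct specialization of \Cref{POD_error} to the particular data set and weights induced by the difference quotient operator. As noted immediately after \Cref{POD_DQ_operator}, we have $K_{DQ} f = K f$ once we set $w^i = u^i$ and $\gamma_i = \Delta t$ for $i = 1, \ldots, N$, together with $w^{N+i} = \partial u^i$ and $\gamma_{N+i} = \Delta t$ for $i = 1, \ldots, N-1$, so that $M = 2N-1$. Since $K_{DQ}$ and $K$ are literally the same operator under this identification, they share the same singular value decomposition; in particular the POD modes $\{\varphi_i\}$ and the count $s$ of positive singular values are common to both, and the eigenvalues satisfy $\lambda_i^{DQ} = \lambda_i$. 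Thus every conclusion of \Cref{POD_error} for $K$ transfers verbatim to $K_{DQ}$, and the entire proof is a reduction followed by bookkeeping.

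First I would dispatch the combined $X$-norm identity. Applying \Cref{knownPODerror} to the data set $\{w^j\}_{j=1}^{2N-1}$ gives $\sum_{j=1}^{2N-1} \gamma_j \| w^j - \Pi_r^X w^j \|_X^2 = \sum_{k=r+1}^{s} \lambda_k$. Splitting the left-hand sum at $j = N$ and reindexing the tail via $w^{N+i} = \partial u^i$ recovers exactly the two sums on the left of the first displayed equation, while the right-hand side is $\sum_{i=r+1}^{s} \lambda_i^{DQ}$ by the identification above. The identical splitting applied to the two conclusions of \Cref{POD_error} yields the second and third equations, once we verify the hypothesis $W \subset Y$: if $U \subset Y$, then each snapshot $u^j$ lies in $Y$, and each difference quotient $\partial u^j = (u^{j+1} - u^j)/\Delta t$ is a linear combination of elements of the vector space $Y$ and hence also lies in $Y$, so the full set $W = \{w^j\}_{j=1}^{2N-1}$ satisfies $W \subset Y$ as required.

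There is no genuinely difficult step here; the content is entirely in the observation $K_{DQ} = K$, after which everything is routine. The only points deserving a moment of care are the reindexing of the difference-quotient block of the sum and the check that $W \subset Y$ follows from $U \subset Y$ by linearity. I would also note in passing that the first equation is in fact the $Y = X$ instance of the second, since the POD modes are orthonormal in $X$ so that $\|\varphi_i\|_X = 1$, and therefore it need not be argued separately.
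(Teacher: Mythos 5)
Your proposal is correct and follows exactly the paper's own route: the paper's proof is a one-line reduction citing \Cref{knownPODerror} and \Cref{POD_error} under the identification $K_{DQ}f = Kf$ with $w^i = u^i$, $w^{N+i} = \partial u^i$, and $\gamma_j = \Delta t$. Your added bookkeeping (the splitting/reindexing of the sum, the check that $U \subset Y$ implies $W \subset Y$ by linearity, and the remark that the first identity is the $Y = X$ case of the second) simply makes explicit what the paper leaves implicit.
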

\begin{proof}
	This result follows from \Cref{knownPODerror} and \Cref{POD_error} by taking $\{\lambda_j^{DQ} \}_{j=1}^s$ as the POD eigenvalues for the POD operator in \Cref{POD_DQ_operator}.
\end{proof}
We have the following result about the pointwise error bounds for this POD case. A similar result with a different set of constant weights can be found in \cite[Theorem 3.7]{Koc2020}. The proof of the result below is similar so we omit it here. That theorem was key to obtaining the optimal pointwise POD ROM error bounds which were a main contribution of that work.
%These pointwise error bounds are used to show POD ROM error bounds for the standard DQ case and are included here for completeness.
\begin{theorem}\label{thm:POD_DQ_pointwise_error}
	Let $ X_r = \mathrm{span}\{ \varphi_k \}_{k=1}^r $ and $ \Pi_r^X : X \to X $ be the orthogonal projection onto $ X_r $. Let $s$ be the number of positive POD singular values for $K_{DQ}$. We have 
	\begin{equation}
	\max_{1 \leq j \leq N} \|u^j - \Pi_r^X u^j \|_X^2 \leq C\left( \sum_{i=r+1}^s \lambda_i^{DQ} \right).
	\end{equation}
	If Y is a Hilbert space with $U \subset Y$ then
	\begin{equation}
	\max_{1 \leq j \leq N} \|u^j - \Pi_r^X u^j \|_Y^2 \leq C\left( \sum_{i=r+1}^s \lambda_i^{DQ} \|\varphi_i\|^2_Y \right).
	\end{equation}
%	\begin{equation}
%	\|u^j - \Pi_r^Y u^j \|_X^2 \leq C\left( \sum_{i=r+1}^s \lambda_i^{DQ} \|\varphi_i - \Pi_r^Y \varphi_i \|^2_X \right),
%	\end{equation}
%	and
	If $\pi_r : Y \to Y$ is a bounded linear projection onto $X_r$ then
	\begin{equation}
	\max_{1 \leq j \leq N} \|u^j - \pi_r u^j \|_Y^2 \leq C\left( \sum_{i=r+1}^s \lambda_i^{DQ} \|\varphi_i - \pi_r \varphi_i \|^2_Y \right)
	\end{equation}
	where $C = 2\max\{T^{-1}, T\} $.
\end{theorem}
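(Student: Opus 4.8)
The plan is to reduce all three inequalities to a single discrete interpolation estimate and then invoke the error formulas of \Cref{POD_error_DQ}. Fix a bounded linear projection $P$ onto $X_r$ (take $P = \Pi_r^X$ for the first two claims and $P = \pi_r$ for the third) and write $e^j = u^j - P u^j$. Since $P$ is linear and the difference quotient operator is linear, the error commutes with differencing, i.e.\ $\partial e^j = \partial u^j - P\,\partial u^j$. The heart of the argument is then to establish, for a Hilbert space $Y$ containing the data, the inequality
\begin{equation}\label{eqn:discrete_interp}
\max_{1 \leq j \leq N} \|e^j\|_Y^2 \leq 2\max\{T^{-1}, T\}\left( \sum_{j=1}^{N} \Delta t \|e^j\|_Y^2 + \sum_{j=1}^{N-1} \Delta t \|\partial e^j\|_Y^2 \right).
\end{equation}
Once \Cref{eqn:discrete_interp} is in hand, each claim follows by substituting the matching right-hand-side formula from \Cref{POD_error_DQ}: the $X$-norm sum equals $\sum_{i=r+1}^s \lambda_i^{DQ}$, the $Y$-norm sum with $P = \Pi_r^X$ equals $\sum_{i=r+1}^s \lambda_i^{DQ}\|\varphi_i\|_Y^2$, and the $Y$-norm sum with $P = \pi_r$ equals $\sum_{i=r+1}^s \lambda_i^{DQ}\|\varphi_i - \pi_r\varphi_i\|_Y^2$.

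To prove \Cref{eqn:discrete_interp}, I would first write a discrete fundamental-theorem-of-calculus identity: for any indices $m$ and $j$, telescoping gives $\|e^j\|_Y^2 - \|e^m\|_Y^2$ as a sum of single-step increments over the indices between $m$ and $j$. Each increment is handled by the summation-by-parts identity $\|e^{\ell+1}\|_Y^2 - \|e^\ell\|_Y^2 = \Delta t\,\mathrm{Re}\,(\partial e^\ell, e^{\ell+1} + e^\ell)_Y$ (the real part covers the complex case), so Cauchy--Schwarz yields $|\,\|e^j\|_Y^2 - \|e^m\|_Y^2\,| \leq \sum_{\ell=1}^{N-1} \Delta t \,\|\partial e^\ell\|_Y\big(\|e^{\ell+1}\|_Y + \|e^\ell\|_Y\big)$, a bound uniform in both $m$ and $j$. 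Next I would eliminate the $\|e^m\|_Y^2$ term by choosing $m$ to minimize it; since the minimum is at most the weighted average and $N\Delta t = T + \Delta t \geq T$, this gives $\min_m \|e^m\|_Y^2 \leq T^{-1}\sum_{j} \Delta t\|e^j\|_Y^2$. Applying discrete Cauchy--Schwarz to the remaining double sum together with $(\|e^{\ell+1}\|_Y + \|e^\ell\|_Y)^2 \leq 2(\|e^{\ell+1}\|_Y^2 + \|e^\ell\|_Y^2)$ bounds it by $2\big(\sum_j \Delta t\|e^j\|_Y^2\big)^{1/2}\big(\sum_\ell \Delta t\|\partial e^\ell\|_Y^2\big)^{1/2}$.

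Writing $A$ and $B$ for the two sums on the right of \Cref{eqn:discrete_interp}, these estimates combine to $\max_j\|e^j\|_Y^2 \leq T^{-1}A + 2\sqrt{AB}$, and the final step is the weighted AM--GM bound $2\sqrt{AB} \leq T^{-1}A + TB$, giving $2T^{-1}A + TB \leq 2\max\{T^{-1},T\}(A+B)$, which is exactly \Cref{eqn:discrete_interp}. I expect the main obstacle to be bookkeeping rather than conceptual: keeping the telescoping bound uniform over both $m$ and $j$ (so that it survives the minimization over $m$), correctly handling the two orientations $m < j$ and $m > j$, and distributing constants through Cauchy--Schwarz and AM--GM so they collapse to precisely $C = 2\max\{T^{-1},T\}$ rather than something larger. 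The complex case requires only the routine insertion of $\mathrm{Re}$ in the summation-by-parts step.
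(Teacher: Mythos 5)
Your proof is correct and yields exactly the stated constant $C = 2\max\{T^{-1},T\}$, and its skeleton coincides with the argument the paper intends: the paper omits the proof of this theorem, deferring to \cite[Theorem~3.7]{Koc2020}, whose strategy is precisely yours --- establish a discrete interpolation inequality for $\max_j \|e^j\|^2$ anchored at the index $m$ minimizing $\|e^m\|$, then substitute the exact error formulas of \Cref{POD_error_DQ}. The only genuine difference is how that interpolation inequality is proved. The paper's route (visible in its own \Cref{lemma}, which is the same inequality anchored at $z^1$ instead of the minimizer, whence its constant $2\max\{T,1\}$) telescopes the vectors themselves: writing $A$ and $B$ for the two weighted sums, the identity $e^j = e^m + \sum_{\ell} \Delta t\, \partial e^\ell$, the triangle inequality, Cauchy--Schwarz, and squaring give $\max_j \|e^j\|^2 \leq 2T^{-1}A + 2TB$ in three lines. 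You instead telescope the squared norms through the polarization identity $\|e^{\ell+1}\|^2 - \|e^\ell\|^2 = \Delta t\, \mathrm{Re}\,(\partial e^\ell, e^{\ell+1}+e^\ell)_Y$, which then requires a second Cauchy--Schwarz over $\ell$, the bound $(\|e^{\ell+1}\|+\|e^\ell\|)^2 \leq 2(\|e^{\ell+1}\|^2+\|e^\ell\|^2)$, and a weighted AM--GM to absorb the cross term $2\sqrt{AB}$; I checked each step and the constants do collapse correctly (you even obtain the marginally sharper intermediate bound $2T^{-1}A + TB$ before passing to $2\max\{T^{-1},T\}(A+B)$). The tradeoff: your polarization route costs extra bookkeeping for the same final constant, though it does make the complex-scalar case explicit via $\mathrm{Re}$, something the vector-telescoping route handles silently because it never forms an inner product.
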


In \Cref{sec:PointwiseErrorBounds} we obtain a similar pointwise POD projection error result for the new POD approach described below in \Cref{sec:New_POD_DQ}.

\section{A New Approach to POD with Difference Quotients}\label{sec:New_POD_DQ}

Next, we return to the question posed in the introduction: ``Can we obtain all of the same numerical analysis benefits of using DQs with POD using a data set without redundancy?" We obtain a positive answer to this question by introducing a new POD problem and operator. Instead of including all of the POD data snapshots and all of the difference quotients as in \Cref{ssec:POD_DQ}, we include the first data snapshot and all of the difference quotients. Thus for the data $U = \{u^j\}_{j=1}^N \subset X$, the new POD problem is to minimize the error given by  
\begin{equation}\label{POD_DQ1_problem}
E_r^{DQ1} = \|u^1-\Pi_r^X u^1 \|_X^2 + \sum_{j=1}^{N-1} \Delta t \|\partial u^j - \Pi_r^X \partial u^j \|_X^2
\end{equation}
where the difference quotients are defined by
%\begin{equation}
%\partial u^j = \frac{u^{j+1}- u^j}{\Delta t}
%\end{equation}
\Cref{diffquotient}. Note that the POD error function in \Cref{POD_DQ1_problem} does not include the weighted sum of the errors of the regular snapshots; this contrasts with the POD approaches in \Cref{sec:standardPOD}, which both include such error terms, see \Cref{eqn:data_approx_error_standardPOD} and  \Cref{POD_DQ_problem}. Furthermore, in the POD approaches in \Cref{sec:standardPOD}, we have exact error formulas for these error terms; see \Cref{POD_error} and \Cref{POD_error_DQ}. In \Cref{sec:PointwiseErrorBounds}, we consider the weighted sum of the errors of the regular snapshots for this new approach and obtain an approximation error result in \Cref{cor:totalbounds}.

Note that with this approach we have a total of $N-1$ difference quotients. Together with the single snapshot, we have a total of $N$ data snapshots for this POD problem. This is an improvement from the standard DQ approach to POD which has $2N-1$ data snapshots.

The minimum error can be found using the POD operator:
\begin{equation}\label{POD_DQ1_operator}
K_1 f = f^1 u^1 + \sum_{j=1}^{N-1} \Delta t f^{j+1} \partial u^j
\end{equation}
We have $K_1 f = K f$ where $w^1 = u^1$, $\gamma_1= 1$, $\gamma_{i+1} = \Delta t$, and $w^{i+1} = \partial u^i$ for $i = 1,\ldots ,N-1$. We choose to use the constant time step, $\Delta t$, as the weight for simplicity throughout the work. The results can be extended to include variable weights as well. Recall that for certain choices of weights one can approximate a time integral and the difference quotients approximate a time derivative.
%If $\lambda_k^{DQ1} >0$ then 
%\begin{equation*}
%\varphi_k = (\lambda^{DQ1}_k)^{-1/2} K_{1} f^k
%\end{equation*}

Note that in contrast to the data set created for the standard POD approach with DQs, the data set for this new approach is linearly independent if the original data set is linearly independent as shown in \Cref{DQ1_lin_indep}.

\begin{lemma}\label{DQ1_lin_indep}
	If $\{u^i\}_{i=1}^N$ is linearly independent then $\{w^i\}_{i=1}^{N}$ given by $w^1 = u^1$ and $w^{i+1} = \partial u^i$ for $i = 1,\ldots ,N-1$ is linearly independent. 
\end{lemma}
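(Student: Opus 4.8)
The plan is to show that the linear map carrying the collection $\{u^i\}_{i=1}^N$ to the new collection $\{w^i\}_{i=1}^N$ is represented by an invertible matrix, so it preserves linear independence. Since each $w^i$ is by construction a linear combination of the $u^j$, every $w^i$ lies in $\mathrm{span}\{u^j\}_{j=1}^N$, and it suffices to check that the coefficient matrix relating the two collections is nonsingular.

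Concretely, I would write $w^1 = u^1$ and, for $j = 2, \ldots, N$, use $w^j = \partial u^{j-1} = (u^j - u^{j-1})/\Delta t$. Collecting these into the relation $w^j = \sum_{k=1}^N A_{jk} u^k$ exhibits $A$ as a lower bidiagonal matrix whose diagonal entries are $A_{11} = 1$ and $A_{jj} = 1/\Delta t$ for $j \geq 2$, with subdiagonal entries equal to $-1/\Delta t$. Being lower triangular with nonzero diagonal, $A$ has determinant $(\Delta t)^{-(N-1)} \neq 0$, hence is invertible, and an invertible change of coordinates sends a linearly independent set to a linearly independent set.

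The cleanest self-contained route, which avoids invoking the determinant, is a direct back-substitution. Suppose $\sum_{i=1}^N c_i w^i = 0$. Substituting the definitions and gathering the coefficient of each $u^j$ gives, by linear independence of $\{u^j\}$, the scalar equations $c_N/\Delta t = 0$ from the $u^N$ term, $(c_j - c_{j+1})/\Delta t = 0$ from the $u^j$ term for $2 \leq j \leq N-1$, and $c_1 - c_2/\Delta t = 0$ from the $u^1$ term. Reading these from the bottom up forces $c_N = c_{N-1} = \cdots = c_2 = 0$ and finally $c_1 = 0$, so $\{w^i\}_{i=1}^N$ is linearly independent.

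There is no serious obstacle here; the only point requiring care is the index bookkeeping when collecting the coefficients of the $u^j$, since the snapshot $u^1$ appears both in $w^1$ and, with the opposite sign, in $w^2$. Consequently the $u^1$ equation is the one that differs in form from the generic telescoping equation, and it is precisely this equation that pins down $c_1$ after the others have forced $c_2 = 0$. Once the coefficients are organized according to the triangular structure, the conclusion is immediate.
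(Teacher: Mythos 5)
Your proof is correct and, in its back-substitution form, is essentially identical to the paper's own argument: assume a vanishing linear combination of the $w^i$, regroup coefficients on each $u^j$, invoke linear independence of $\{u^j\}$, and solve the resulting triangular system from the bottom up. The invertible bidiagonal-matrix framing you mention first is just the same triangular structure packaged as a determinant computation, so it does not constitute a genuinely different route.
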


\begin{proof}
	We show that if 
	\begin{equation*}
	c_1 w^1 + c_2 w^2 + \cdots + c_N w^N = 0
	\end{equation*}
	then $c_i = 0$ for all $i=1, \ldots , N$. We have 
	\begin{equation*}
	c_1 u^1 + c_2 \left(\frac{u^2 - u^1}{\Delta t}\right) + \cdots + c_i \left(\frac{u^i-u^{i-1}}{\Delta t}\right) + \cdots + c_N \left(\frac{u^N-u^{N-1}}{\Delta t} \right) = 0
	\end{equation*} 
	then
	\begin{equation*}
	\left(c_1- \frac{c_2}{\Delta t}\right)u^1 + \left(\frac{c_2}{\Delta t} - \frac{c_3}{\Delta t}\right)u^2 +\cdots + \left(\frac{c_{N-1}}{\Delta t}- \frac{c_N}{\Delta t}\right)u^{N-1} + \frac{c_N}{\Delta t} u^N = 0.
	\end{equation*}
	Since $\{u_i\}$ is linearly independent we know each of these coefficients must equal $0$. Solving that system of equations leads to the conclusion that $c_i = 0$ for all $i = 1, \ldots , N$.
\end{proof}

If we let $\{\lambda_j^{DQ1} \}_{j=1}^N$ be the POD eigenvalues for this new POD approach, and let $\{\varphi_k\}_{k=1}^{r}$ be the POD modes for this data, we get the following error formulas given in \Cref{lem:POD_DQ1_error}. 

\begin{lemma}\label{lem:POD_DQ1_error} 
	Let $ X_r = \mathrm{span}\{ \varphi_k \}_{k=1}^r $ and $ \Pi_r^X : X \to X $ be the orthogonal projection onto $ X_r $. Let $s$ be the number of positive POD singular values for $K_1$ defined in \Cref{POD_DQ1_operator}. We have the following formula for the data approximation error: 
	\begin{equation}
	\|u^1-\Pi_r^X u^1 \|_X^2 + \sum_{j=1}^{N-1} \Delta t \|\partial u^j - \Pi_r^X \partial u^j \|_X^2 = \sum_{i=r+1}^{s} \lambda_i^{DQ1}.
	\end{equation}
	If Y is a Hilbert space with $U \subset Y$ then 
	\begin{equation}\label{equ:X_Ytotalerror}
	\|u^1-\Pi_r^X u^1 \|_Y^2 + \sum_{j=1}^{N-1} \Delta t \|\partial u^j - \Pi_r^X \partial u^j \|_Y^2 = \sum_{i=r+1}^{s} \lambda_i^{DQ1} \|\varphi_i \|^2_Y.
	\end{equation}
%	\begin{equation}
%	\|u^1-\Pi_r^Y u^1 \|_X^2 + \sum_{j=1}^{N-1} \Delta t \|\partial u^j - \Pi_r^Y \partial u^j \|_X^2 = \sum_{i=r+1}^{s} \lambda_i^{DQ1} \|\varphi_i - \Pi_r^Y \varphi_i\|^2_X,
%	\end{equation}
%	and 
	If in addition $\pi_r : Y \to Y$ is a bounded linear projection onto $X_r$ then,
	\begin{equation}
	\|u^1-\pi_r u^1 \|_Y^2 + \sum_{j=1}^{N-1} \Delta t \|\partial u^j - \pi_r \partial u^j \|_Y^2 = \sum_{i=r+1}^{s} \lambda_i^{DQ1} \|\varphi_i - \pi_r \varphi_i\|^2_Y.
	\end{equation}
\end{lemma}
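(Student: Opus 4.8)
The plan is to reduce all three formulas to the corresponding statements for the standard POD operator, exactly as was done in the proof of \Cref{POD_error_DQ}. The key observation is already recorded in the text preceding the lemma: the new operator $K_1$ from \Cref{POD_DQ1_operator} coincides with the standard POD operator $K$ from \Cref{XPODdiscrete} under the identification $w^1 = u^1$ and $w^{i+1} = \partial u^i$ for $i = 1, \ldots, N-1$, together with the special weights $\gamma_1 = 1$ and $\gamma_{i+1} = \Delta t$. Since $K_1 = K$ for this data and these weights, the two operators share a singular value decomposition; in particular the positive POD eigenvalues of $K_1$ are precisely the $\{\lambda_i^{DQ1}\}_{i=1}^s$ and the POD modes are the $\{\varphi_k\}$. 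This identification is the entire content of the proof, so I would state it first and then simply substitute into the already-established identities.

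Next I would establish the $X$-norm formula. Writing out the standard POD data approximation error $E_r$ of \Cref{eqn:data_approx_error_standardPOD} for this choice of data and weights gives
\begin{equation*}
E_r = \gamma_1 \|w^1 - \Pi_r^X w^1\|_X^2 + \sum_{j=1}^{N-1} \gamma_{j+1} \|w^{j+1} - \Pi_r^X w^{j+1}\|_X^2 = \|u^1 - \Pi_r^X u^1\|_X^2 + \sum_{j=1}^{N-1} \Delta t \|\partial u^j - \Pi_r^X \partial u^j\|_X^2,
\end{equation*}
where the $u^1$ term carries no $\Delta t$ factor precisely because $\gamma_1 = 1$. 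Invoking \Cref{knownPODerror} with the eigenvalues $\{\lambda_i^{DQ1}\}$ then yields the first formula.

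The remaining two formulas follow in the same manner, applying \Cref{POD_error} in place of \Cref{knownPODerror}. For a general Hilbert space $Y$ with $U \subset Y$, the first identity of \Cref{POD_error} gives $\sum_{j=1}^N \gamma_j \|w^j - \Pi_r^X w^j\|_Y^2 = \sum_{i=r+1}^s \lambda_i^{DQ1} \|\varphi_i\|_Y^2$; expanding the left-hand side with the special weights produces \Cref{equ:X_Ytotalerror}. The third formula, involving an arbitrary bounded linear projection $\pi_r : Y \to Y$ onto $X_r$, follows identically from the second identity of \Cref{POD_error}.

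Since every step is a direct substitution into a previously established identity, there is no genuine analytic obstacle. The only point demanding care is bookkeeping: one must keep the weight $\gamma_1 = 1$ distinct from the difference-quotient weights $\gamma_{i+1} = \Delta t$, because it is exactly this asymmetry that strips the $\Delta t$ factor from the $u^1$ term and distinguishes this error functional from the standard DQ functional in \Cref{POD_DQ_problem}.
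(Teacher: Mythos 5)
Your proposal is correct and follows exactly the paper's own argument: identify $K_1$ with the standard POD operator $K$ via $w^1 = u^1$, $\gamma_1 = 1$, $w^{i+1} = \partial u^i$, $\gamma_{i+1} = \Delta t$, and then substitute into \Cref{knownPODerror} and \Cref{POD_error}. The paper's proof is a one-line citation of this same reduction; you have merely written out the bookkeeping explicitly, which is fine.
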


\begin{proof}
	This result follows from \Cref{knownPODerror} and  \Cref{POD_error} by taking $\{\lambda_j^{DQ1} \}_{j=1}^s$ as the POD eigenvalues for the POD operator in \Cref{POD_DQ1_operator}.
\end{proof}

Preliminary computational results and pointwise error estimates for this new POD approach are discussed in \Cref{sec:some_comp_1} and \Cref{sec:PointwiseErrorBounds} respectively. 

\subsection{Preliminary Computations}\label{sec:some_comp_1}

Before moving to our main results we perform some preliminary computations to test the new POD approach with difference quotients. For all computations throughout this work we consider the following test problem.

\medskip

\noindent\textbf{Test Problem:} Consider the one dimensional heat equation 
	\begin{align*}
	u_t - \nu u_{xx} &= 0 , \text{ in } \Omega \times [0,T] \\
	u(x,0)&= e^{x} \sin(\pi x) 
	\end{align*}
	with $\nu=1$, $\Omega = [0,1]$, $T = 1$, and zero Dirichlet boundary conditions.
	
\medskip

For the data $\{u^j\}$, we compute the solution using the finite element method with linear elements, equally spaced nodes, and backward Euler with a constant time step for the time stepping. The initial condition is taken to be the linear interpolation of the initial condition with respect to the finite element nodes. For this data and with the POD space $X = L^2(\Omega)$, we can calculate the POD modes, POD singular values, and the data approximation errors exactly which allows for comparison between the errors in the formulas from \Cref{lem:POD_DQ1_error} and the actual approximation errors. In order to compute the singular value decomposition (SVD) of the POD operator, we use the technique described in \cite[Section 2.2]{Fareed18} with a minor modification to account for the POD weights. This procedure works well and is highly accurate for smaller data sets; for larger data sets one could use an incremental SVD approach or another related algorithm instead, see e.g. \cite{Baker2012, Brand2006, Himpe2018, Fareed2020, Fareed18} and the references therein. 

First we plot the singular values for both the standard POD and our new POD approach that includes one data snapshot and all of the difference quotients. For the standard POD computations we use the data set consisting of only the regular snapshots $\{ w^j \}_{j=1}^N = \{ u^j \}_{j=1}^N$ and we choose constant weights $\gamma_j = \Delta t$ for $j = 1, \ldots, N$. The singular value plots allow for a quick comparison between the two approaches to POD and are given in \Cref{fig:StandardandDQ1PODsvplot}. For each plot, we show the first $20$ POD singular values for $20, 50, 100,$ and $150$ finite element nodes when using $100$ equally spaced time steps. The POD singular values decay at a similar rate which indicates that the POD basis for each case has a similar ability to approximate the data.  

\begin{figure}%
	\centering
	\subfloat[Standard POD]{{\includegraphics[width=7cm]{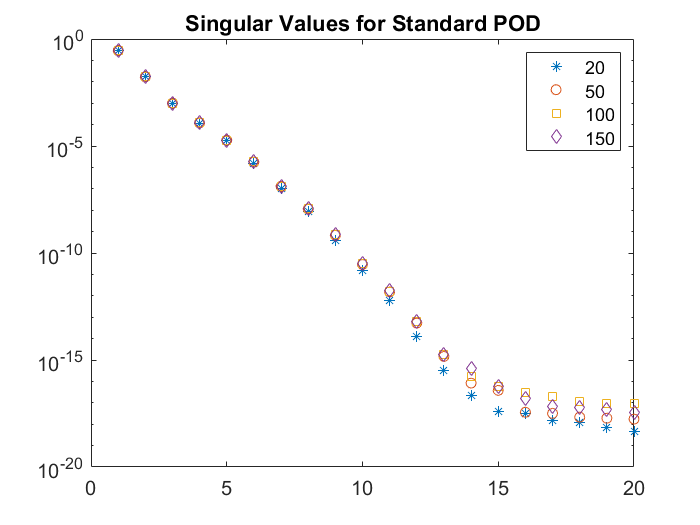} }}%
	\qquad
	\subfloat[New DQ POD]{{\includegraphics[width=7cm]{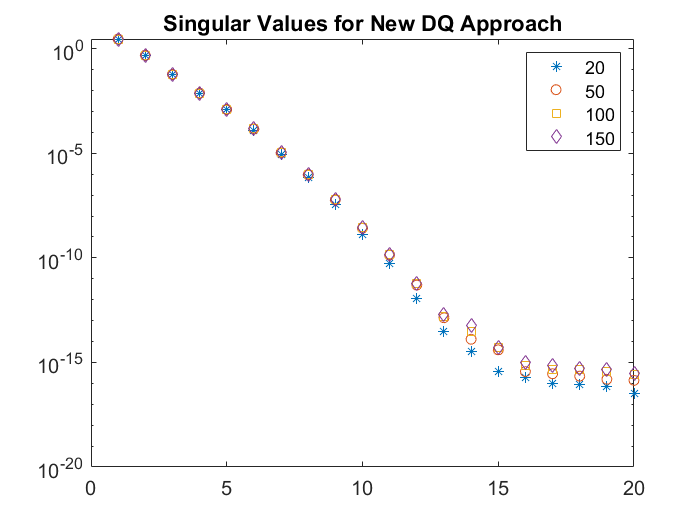} }}%
	\caption{Plot of singular values of POD operators using different numbers of finite element nodes}
	\label{fig:StandardandDQ1PODsvplot}
\end{figure}

Next, we consider the data approximation error results given in \Cref{lem:POD_DQ1_error} numerically. For these computations we take $200$ equally spaced time steps and $100$ equally spaced finite element nodes and compute the actual errors and the error formulas. Recall that $X = L^2(\Omega)$ is the POD space and note that here we take either $Y = H_0^1(\Omega)$ or $Y = L^2(\Omega)$. Also note the projection $\pi_r$ is taken to be the Ritz projection, which we discuss in more detail in \Cref{sec:ROM}. The results are shown for all formulas of \Cref{lem:POD_DQ1_error} in \Cref{tab:comp} for $r = 4$, $r=6$, and $r = 8$. For example, the second row for each $r$ value in the table gives the values for
$$
\text{actual error } = 	\|u^1-\Pi_r^X u^1 \|_{H_0^1}^2 + \sum_{j=1}^{N-1} \Delta t \|\partial u^j - \Pi_r^X \partial u^j \|_{H_0^1}^2 ,  
$$
and
$$
\text{error formula } = \sum_{i = r+1}^s \lambda_i^{DQ1} \|\varphi_k\|^2_{H_0^1}
$$
with the respective values for $r$. Round-off errors in the POD computations can cause very small imaginary parts to occur in the error formulas. Thus we report the absolute value of the computed error formulas. 
Note that the difference between the actual errors and the error formulas is unnoticeable to the given significant digits. These computational results verify what we show analytically. Similar results are achieved when using $X = H_0^1(\Omega)$. 

\begin{table}
	\renewcommand{\arraystretch}{1.25}
	\begin{center}	
		\begin{tabular}{|c|c|c|c|c|}
			\hline
			$r$ value & Projection & Norm & Actual Error & Error Formula \\
			\hline
			4 & $\Pi_r^X$ & $L^2(\Omega)$ & 9.761e-06 & 9.761e-06 \\
			&$\Pi_r^X$ & $H_0^1(\Omega)$ & 6.200e-03 & 6.200e-03 \\
			&$\pi_r$ & $L^2(\Omega)$ & 1.736e-05 & 1.736e-05 \\
			&$\pi_r$ & $H_0^1(\Omega)$ & 3.100e-03 & 3.100e-03 \\
			\hline
			6 & $\Pi_r^X$ & $L^2(\Omega)$ & 5.482e-09 & 5.482e-09 \\
			&$\Pi_r^X$ & $H_0^1(\Omega)$ & 1.036e-05 & 1.036e-05 \\
			&$\pi_r$ & $L^2(\Omega)$ & 1.133e-08 & 1.133e-08 \\
			&$\pi_r$ & $H_0^1(\Omega)$ & 5.159e-06 & 5.159e-06 \\
			\hline
			8 & $\Pi_r^X$ & $L^2(\Omega)$ & 1.557e-12 & 1.557e-12 \\
			&$\Pi_r^X$ & $H_0^1(\Omega)$ & 4.772e-09 & 4.772e-09 \\
			&$\pi_r$ & $L^2(\Omega)$ & 3.481e-12 & 3.481e-12 \\
			&$\pi_r$ & $H_0^1(\Omega)$ & 1.659e-09 & 1.659e-09 \\
			\hline
		\end{tabular}
		\caption{Actual Error vs. Error Formulas from \Cref{lem:POD_DQ1_error} \label{tab:comp} for New DQ POD}
	\end{center}
\end{table}

%\begin{table}
%	\begin{center}	
%		\begin{tabular}{|c|c|c|c|}
%			\hline
%			Projection & Norm & Actual Error & Error Formula \\
%			\hline
%			$\Pi_r^X$ & X & 9.761e-06 & 9.761e-06 \\
%			$\Pi_r^X$ & Y & 6.200e-03 & 6.200e-03 \\
%			$\Pi_r^Y$ & X & 1.736e-05 & 1.736e-05 \\
%			$\Pi_r^Y$ & Y & 3.100e-03 & 3.100e-03 \\
%			\hline
%		\end{tabular}
%		\caption{Actual Error vs. Error Formulas for $r = 4$\label{tab:comp_r4} for New DQ POD}
%	\end{center}
%\end{table}

%\begin{table}
%	\begin{center}	
%		\begin{tabular}{|c|c|c|c|}
%			\hline
%			Projection & Norm & Actual Error & Error Formula \\
%			\hline
%			$\Pi_r^X$ & X & 5.482e-09 & 5.482e-09 \\
%			$\Pi_r^X$ & Y & 1.036e-05 & 1.036e-05 \\
%			$\Pi_r^Y$ & X & 1.133e-08 & 1.133e-08 \\
%			$\Pi_r^Y$ & Y & 5.159e-06 & 5.159e-06 \\
%			\hline
%		\end{tabular}
%		\caption{Actual Error vs. Error Formulas for $r = 6$\label{tab:comp_r6} for New DQ POD}
%	\end{center}
%\end{table}

%\begin{table}
%	\begin{center}	
%		\begin{tabular}{|c|c|c|c|}
%			\hline
%			Projection & Norm & Actual Error & Error Formula \\
%			\hline
%			$\Pi_r^X$ & X & 1.557e-12 & 1.557e-12 \\
%			$\Pi_r^X$ & Y & 4.772e-09 & 4.772e-09 \\
%			$\Pi_r^Y$ & X & 3.481e-12 & 3.481e-12 \\
%			$\Pi_r^Y$ & Y & 1.659e-09 & 1.659e-09 \\
%			\hline
%		\end{tabular}
%		\caption{Actual Error vs. Error Formulas for $r = 8$\label{tab:comp_r8} for New DQ POD}
%	\end{center}
%\end{table}

Computational comparisons between all three of the methods can be found in \Cref{sec:rom_computations_comp} where we consider the errors in the reduced order models.

\subsection{Pointwise Error Bounds}\label{sec:PointwiseErrorBounds}

Using the technique from \cite[Lemma 3.6]{Koc2020} we establish the following lemma which allows us to directly prove the POD pointwise projection error bounds for the new DQ POD approach and an approximation error result for the weighted sum of the errors of the regular snapshots. These results are necessary to prove the reduced order model error bounds and show their optimality in \Cref{sec:ROM}. 

\begin{lemma}\label{lemma}
	Let $T>0$, $Z$ be a normed space, $\{z^j \}_{j=1}^{N} \subset Z$, and $\Delta t = T/(N-1)$. Then
	\begin{equation}
	\max_{1\leq j\leq N} \|z^j \|_Z^2 \leq C\left( \|z^1\|_Z^2 + \sum_{\ell=1}^{N-1} \Delta t \|\partial z^\ell \|_Z^2 \right)
	\end{equation}
	where $\partial z^\ell = \frac{z^{\ell+1}-z^\ell}{\Delta t}$ for $\ell = 1,\ldots ,N-1$, and $C = 2\max \{T,1\}$.
\end{lemma}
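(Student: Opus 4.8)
The plan is to exploit a discrete telescoping identity, which is the exact discrete analog of the fundamental theorem of calculus, together with a discrete Cauchy--Schwarz inequality. The starting observation is that for any fixed $j$ with $1 \le j \le N$, writing each increment as $z^{\ell+1} - z^\ell = \Delta t\, \partial z^\ell$ and summing telescopically gives
$$
z^j = z^1 + \sum_{\ell=1}^{j-1} \Delta t\, \partial z^\ell.
$$
First I would apply the triangle inequality to this identity to obtain $\|z^j\|_Z \le \|z^1\|_Z + \sum_{\ell=1}^{j-1} \Delta t \|\partial z^\ell\|_Z$.

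Next I would square both sides and use the elementary inequality $(a+b)^2 \le 2a^2 + 2b^2$ to separate the contribution of the initial term from that of the difference quotients, yielding
$$
\|z^j\|_Z^2 \le 2\|z^1\|_Z^2 + 2\left(\sum_{\ell=1}^{j-1} \Delta t \|\partial z^\ell\|_Z\right)^2.
$$
The key step is then to control the squared sum. Writing $\Delta t = \sqrt{\Delta t}\cdot\sqrt{\Delta t}$ and applying the discrete Cauchy--Schwarz inequality gives
$$
\sum_{\ell=1}^{j-1} \Delta t \|\partial z^\ell\|_Z \le \left(\sum_{\ell=1}^{j-1}\Delta t\right)^{1/2}\left(\sum_{\ell=1}^{j-1}\Delta t\,\|\partial z^\ell\|_Z^2\right)^{1/2}.
$$
Since $\sum_{\ell=1}^{j-1}\Delta t \le \sum_{\ell=1}^{N-1}\Delta t = (N-1)\Delta t = T$, squaring produces the bound $T\sum_{\ell=1}^{N-1}\Delta t\,\|\partial z^\ell\|_Z^2$, where the sum has been extended to all $N-1$ terms harmlessly.

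Combining these estimates gives $\|z^j\|_Z^2 \le 2\|z^1\|_Z^2 + 2T\sum_{\ell=1}^{N-1}\Delta t\,\|\partial z^\ell\|_Z^2$. To match the stated constant, I would bound each of the two coefficients $2$ and $2T$ above by $2\max\{T,1\}$ and factor it out, then take the maximum over $1 \le j \le N$ on the left; since the right-hand side no longer depends on $j$, the maximum passes through cleanly and yields the claim with $C = 2\max\{T,1\}$. I do not expect any genuine obstacle here: the only point requiring a little care is bookkeeping the constant, specifically recognizing that the time-horizon identity $\sum \Delta t = T$ is what converts the raw $\ell^1$-in-time sum into the weighted $\ell^2$-in-time quantity on the right, and that splitting symmetrically via $(a+b)^2 \le 2a^2 + 2b^2$ is exactly what produces the factor $2\max\{T,1\}$.
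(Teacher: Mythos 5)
Your proposal is correct and follows essentially the same argument as the paper: the telescoping identity $z^j = z^1 + \sum_{\ell=1}^{j-1}\Delta t\,\partial z^\ell$, the triangle inequality, discrete Cauchy--Schwarz with $\sum_{\ell=1}^{j-1}\Delta t \le T$, and the splitting $(a+b)^2 \le 2a^2+2b^2$ to produce $C = 2\max\{T,1\}$. The only difference is the trivial ordering of squaring versus applying Cauchy--Schwarz, which changes nothing.
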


\begin{proof}
	Note using $z^j = z^1 + \sum_{\ell =1}^{j-1} \Delta t (\partial z^\ell)$, we have
	\begin{equation*}
	\|z^j\| \leq \|z^1\| + \sum_{\ell =1}^{j-1} \Delta t \| \partial z^\ell \| \leq \|z^1\| + \left( \sum_{\ell =1}^{j-1} \Delta t \right)^{1/2} \left(  \sum_{\ell =1}^{N-1} \Delta t \| \partial z^\ell \|^2 \right)^{1/2}.		
	\end{equation*}
	Then
	\begin{align*}
	\|z^j\|^2 &\leq 2\|z^1\|^2 + 2\left( \sum_{\ell =1}^{j-1} \Delta t \right) \left(  \sum_{\ell =1}^{N-1} \Delta t \| \partial z^\ell \|^2 \right) \leq 2\|z^1\|^2 + 2 T \sum_{\ell =1}^{N-1} \Delta t \| \partial z^\ell \|^2 	
	%& \leq 2\|z^1\|^2 + 2t_j \sum_{\ell =1}^{s-1} \Delta t \| \partial z^\ell \|^2 \\
	\end{align*}
	since $T = \sum_{j=1}^{N-1} \Delta t = (N-1) \Delta t$. Take the maximum over all $j$ and the result follows. 
\end{proof}

%This lemma allows us to show the following theorem and corollary about pointwise error bounds and total error bounds, respectively.
Next, we obtain a  pointwise POD projection error result for the new POD DQ approach that is very similar to \Cref{thm:POD_DQ_pointwise_error} for the standard POD DQ case.

\begin{theorem}\label{thm:POD_DQ1_pointwise_error}
	Let $ X_r = \mathrm{span}\{ \varphi_k \}_{k=1}^r $ and $ \Pi_r^X : X \to X $ be the orthogonal projection onto $ X_r $. Let $s$ be the number of positive POD eigenvalues for $U = \{u^j\}_{j=1}^N$. Then
	\begin{equation}
	\max_{1\leq j\leq N} \|u^j - \Pi_r^X u^j \|_X^2 \leq C\left( \sum_{i=r+1}^s \lambda_i^{DQ1} \right).
	\end{equation}
	If Y is a Hilbert space with $U \subset Y$ then
	\begin{equation}\label{eqn:X_Ypointwiseerror}
	\max_{1\leq j\leq N} \|u^j - \Pi_r^X u^j \|_Y^2 \leq C\left( \sum_{i=r+1}^s \lambda_i^{DQ1} \|\varphi_i\|^2_Y \right),
	\end{equation}
%	\begin{equation}
%	\max_{1\leq j\leq N} \|u^j - \Pi_r^Y u^j \|_X^2 \leq C\left( \sum_{i=r+1}^s \lambda_i^{DQ1} \|\varphi_i - \Pi_r^Y \varphi_i \|^2_X \right),
%	\end{equation}
%	and
	and in addition if $\pi_r : Y \to Y$ is a bounded linear projection onto $X_r$ then
	\begin{equation}
	\max_{1\leq j\leq N} \|u^j - \pi_r u^j \|_Y^2 \leq C\left( \sum_{i=r+1}^s \lambda_i^{DQ1} \|\varphi_i - \pi_r \varphi_i \|^2_Y \right)
	\end{equation}
	where $C =  2\max \{T,1\}$.
\end{theorem}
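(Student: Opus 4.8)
The plan is to combine the abstract pointwise estimate of \Cref{lemma} with the exact data approximation error formulas of \Cref{lem:POD_DQ1_error}. For the first bound I would apply \Cref{lemma} with $Z = X$ and the particular sequence $z^j = u^j - \Pi_r^X u^j$ for $j = 1, \ldots, N$. This immediately yields
\begin{equation*}
\max_{1 \leq j \leq N} \|u^j - \Pi_r^X u^j\|_X^2 \leq C\left( \|z^1\|_X^2 + \sum_{\ell=1}^{N-1} \Delta t \|\partial z^\ell\|_X^2 \right),
\end{equation*}
with $C = 2\max\{T,1\}$, so the whole argument reduces to identifying the two terms on the right.

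The key structural observation, and the one step that makes the choice of $z^j$ work, is that the difference quotient operator commutes with any \emph{linear} projection. Since $\Pi_r^X$ is linear,
\begin{equation*}
\partial z^\ell = \frac{z^{\ell+1} - z^\ell}{\Delta t} = \frac{u^{\ell+1} - u^\ell}{\Delta t} - \Pi_r^X\!\left(\frac{u^{\ell+1} - u^\ell}{\Delta t}\right) = \partial u^\ell - \Pi_r^X \partial u^\ell,
\end{equation*}
and $z^1 = u^1 - \Pi_r^X u^1$. Substituting these identities into the estimate above shows that the right-hand side is exactly $C$ times the DQ1 data approximation error appearing in \Cref{lem:POD_DQ1_error}. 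Invoking the first error formula of that lemma replaces this quantity with $C\sum_{i=r+1}^s \lambda_i^{DQ1}$, giving the first claim.

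The remaining two bounds follow the same template with only the norm and the projection changed. For the second, I would take $Z = Y$ and keep $z^j = u^j - \Pi_r^X u^j$; the commutation identity is unchanged, since it used only linearity and not the particular norm, and the second error formula of \Cref{lem:POD_DQ1_error} converts the right-hand side into $C\sum_{i=r+1}^s \lambda_i^{DQ1}\|\varphi_i\|_Y^2$. For the third, I would instead set $z^j = u^j - \pi_r u^j$ with $Z = Y$; because $\pi_r$ is assumed to be a bounded \emph{linear} projection, the same commutation $\partial(\pi_r u^\ell) = \pi_r \partial u^\ell$ holds, and the third error formula yields the bound involving $\|\varphi_i - \pi_r \varphi_i\|_Y^2$. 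I do not anticipate a genuine obstacle here: the only point requiring care is verifying the difference-quotient/projection commutation, which hinges entirely on linearity of the projection, together with the fact that the telescoping reindexing underlying \Cref{lemma} is norm-agnostic. Once that is in place, each part is a one-line substitution into the corresponding identity of \Cref{lem:POD_DQ1_error}.
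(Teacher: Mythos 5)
Your proposal is correct and follows essentially the same route as the paper's own proof: apply \Cref{lemma} with $z^j = u^j - \Pi_r^X u^j$ or $z^j = u^j - \pi_r u^j$ and $Z = X$ or $Z = Y$, then invoke the error formulas of \Cref{lem:POD_DQ1_error}. Your explicit verification that the difference quotient commutes with the linear projection is a detail the paper leaves implicit, but it is exactly the observation that makes the substitution work.
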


\begin{proof} To prove this theorem take $z^j = u^j-\Pi_r^X u^j$ or $z^j = u^j-\pi_r u^j$ with $Z = X$ or $Z = Y$ in \Cref{lemma}. For example, if we let $z^j = u^j-\Pi_r^X u^j$ and $Z = X$, then
	\begin{equation}
	\max_{1\leq j\leq N} \|u^j-\Pi_r^X u^j \|_X^2 \leq C\left( \|u^1-\Pi_r^X u^1\|_X^2 + \sum_{\ell=1}^{N-1} \Delta t \|\partial u^\ell-\Pi_r^X \partial u^\ell \|_X^2 \right)
	\end{equation}
	Applying \Cref{lem:POD_DQ1_error} for each of the three cases gives the result. 
\end{proof}

Next, we use the above pointwise error bounds to obtain error bounds for the weighted sum of the errors of the regular snapshots.

\begin{corollary}\label{cor:totalbounds}
		Let $ X_r = \mathrm{span}\{ \varphi_k \}_{k=1}^r $ and $ \Pi_r^X : X \to X $ be the orthogonal projection onto $ X_r $. Let $s$ be the number of positive POD eigenvalues for $U$ then
		\begin{equation}\label{eqn:cor_1}
		\sum_{j=1}^N \Delta t \|u^j - \Pi_r^X u^j \|_X^2 \leq C\left( \sum_{i=r+1}^s \lambda_i^{DQ1} \right).
		\end{equation}
		If Y is a Hilbert space with $U \subset Y$ then
		\begin{equation}\label{eqn:cor_2}
		\sum_{j=1}^N \Delta t\|u^j - \Pi_r^X u^j \|_Y^2 \leq C\left( \sum_{i=r+1}^s \lambda_i^{DQ1} \|\varphi_i\|^2_Y \right).
		\end{equation}
%		\begin{equation}
%		\sum_{j=1}^N \Delta t\|u^j - \Pi_r^Y u^j \|_X^2 \leq C\left( \sum_{i=r+1}^s \lambda_i^{DQ1} \|\varphi_i - \Pi_r^Y \varphi_i \|^2_X \right),
%		\end{equation}
%		and
		If in addition $\pi_r : Y \to Y$ is a bounded linear projection onto $X_r$ then
		\begin{equation}\label{eqn:cor_3}
		\sum_{j=1}^N \Delta t\|u^j - \pi_r u^j \|_Y^2 \leq C\left( \sum_{i=r+1}^s \lambda_i^{DQ1} \|\varphi_i - \pi_r \varphi_i \|^2_Y \right)
		\end{equation}
		where $C =  4\max \{T^2,T\}$.
\end{corollary}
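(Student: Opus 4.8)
The plan is to deduce the corollary directly from the pointwise bounds of \Cref{thm:POD_DQ1_pointwise_error} by trading the maximum over $j$ against the weighted sum. First I would observe that for any nonnegative numbers $\{a_j\}_{j=1}^N$ one trivially has $\sum_{j=1}^N \Delta t\, a_j \le \left(\sum_{j=1}^N \Delta t\right) \max_{1\le j\le N} a_j$. Applying this with $a_j = \|u^j - \Pi_r^X u^j\|_X^2$, and likewise with the $Y$-norm or with $\pi_r$ in place of $\Pi_r^X$, reduces each of the three estimates to controlling the prefactor $\sum_{j=1}^N \Delta t$ and invoking the corresponding pointwise bound already in hand.

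The one point requiring a little care is the exact value of this prefactor. Since $\Delta t = T/(N-1)$, we have $\sum_{j=1}^{N-1}\Delta t = (N-1)\Delta t = T$, but the sum in the corollary runs to $N$, so
\[
\sum_{j=1}^N \Delta t = N\Delta t = T + \Delta t = \frac{N}{N-1}\,T \le 2T,
\]
where the last inequality uses $N \ge 2$. Thus the weights contribute at most a factor of $2T$. This is precisely the place where the weighted sum over all $N$ snapshots, rather than over the $N-1$ difference quotients, introduces an extra $\Delta t$ beyond $T$.

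Combining the two observations, for the $X$-norm case I would write
\[
\sum_{j=1}^N \Delta t\,\|u^j - \Pi_r^X u^j\|_X^2 \le 2T \max_{1\le j\le N}\|u^j - \Pi_r^X u^j\|_X^2,
\]
and then bound the maximum using \Cref{thm:POD_DQ1_pointwise_error}, whose constant is $2\max\{T,1\}$. This yields the overall constant $2T\cdot 2\max\{T,1\} = 4T\max\{T,1\} = 4\max\{T^2,T\}$, matching the claim in \Cref{eqn:cor_1}. The $Y$-norm estimate \Cref{eqn:cor_2} and the projection estimate \Cref{eqn:cor_3} follow in exactly the same way, substituting the second and third pointwise bounds of \Cref{thm:POD_DQ1_pointwise_error}, respectively.

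I do not foresee a genuine obstacle here: the argument is essentially that an $\ell^1$-type weighted sum is dominated by the total weight times the $\ell^\infty$ maximum, combined with the eigenvalue tail bound already established in \Cref{thm:POD_DQ1_pointwise_error}. The only place one could slip is the constant bookkeeping, namely remembering that $\sum_{j=1}^N \Delta t = T + \Delta t \le 2T$ rather than $T$; this extra factor of two is exactly what upgrades the constant from $2\max\{T,1\}$ in the pointwise theorem to $4\max\{T^2,T\}$ in the corollary.
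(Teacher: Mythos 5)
Your proposal is correct and follows essentially the same route as the paper's proof: bound the weighted sum by $N\Delta t \le 2T$ times the pointwise maximum, then invoke \Cref{thm:POD_DQ1_pointwise_error}, giving the constant $2T \cdot 2\max\{T,1\} = 4\max\{T^2,T\}$. Your constant bookkeeping, including the observation that the sum runs to $N$ rather than $N-1$, matches the paper exactly.
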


\begin{proof}
	%Since $ T = \sum_{j=1}^N \Delta t = N \Delta t $, 
	Since $$N \Delta t = \frac{N}{N-1} [(N-1)\Delta t] = [N/(N-1)] T \leq 2T,$$ we have
	\begin{align*}
	\sum_{j=1}^N \Delta t \| u^j - \Pi_r^X u^j \|_X^2  %&\leq  N \Delta t \max_{1\leq j \leq N} \| u^j - \Pi_r^X u^j \|_X^2 \\
	%& = \frac{N}{N-1} (N-1) \Delta t \max_{1\leq j \leq N} \| u^j - \Pi_r^X u^j \|_X^2 \\
	& \leq 2T \max_{1\leq j \leq N} \| u^j - \Pi_r^X u^j \|_X^2.
	\end{align*}
	\Cref{thm:POD_DQ1_pointwise_error} gives \Cref{eqn:cor_1}. The proofs of \Cref{eqn:cor_2,eqn:cor_3} follow in the same way.	
\end{proof}

These results are similar to those for the standard DQ approach while keeping redundancy out of the data set.

\section{Reduced Order Modeling}\label{sec:ROM}
In this section we establish theory to compare the reduced order model solution to the backward Euler finite element solution for the heat equation using our new POD with DQs approach. In this section all POD computations are done using the new approach and all function spaces are assumed to be real. Our analysis and proof techniques strongly rely on the approach in \cite[Section 4]{Koc2020}. We provide proofs to make the work self-contained. 

Let $ \Omega \subset \mathbb{R}^d $ with $ d \geq 1 $ be an open bounded domain with Lipschitz continuous boundary, and define $ V = H^1_0(\Omega) $. The space $ V $ is a Hilbert space with inner product $(g,h)_{H_0^1} = (\nabla g , \nabla h )_{L^2}$. We consider the weak formulation of the heat equation with homogeneous Dirichlet boundary conditions:
\begin{equation}\label{weakformheateq}
(\partial_t u, v)_{L^2} + \nu(\nabla u , \nabla v )_{L^2} = (f,v)_{L^2} \quad \forall v \in V
\end{equation}
where $u(\cdot,0) = u^1$ is the initial condition, $\nu$ is a positive constant, and $f$ is a given forcing function. We project \Cref{weakformheateq} onto a standard conforming  finite element space $V^h \subset V$ and apply backward Euler to obtain
\begin{equation}\label{BEheateq}
\left( \frac{u^{n+1}-u^n}{\Delta t}, v \right)_{L^2} + \nu (\nabla u^{n+1} , \nabla v )_{L^2} = (f^{n+1} , v)_{L^2} \quad \forall v \in V^h.
\end{equation}

%Note $u$ represents the exact solution to the heat equation. 

%\begin{equation}\label{BEheateq_r}
%\left( \frac{u^{n+1}-u^n}{\Delta t}, v_r \right)_{L^2} + \nu (\nabla u^{n+1} , \nabla v_r )_{L^2} = (f^{n+1} , v_r)_{L^2} \quad \forall v \in V_r
%\end{equation}
We use the data set $\{u^n\}_{n=1}^N \subset V^h$ to compute the POD modes $\{\varphi_j\}_{j=1}^r \subset V^h$ using the new DQ approach with respect to the Hilbert space $X$. Below we take $X$ to be either $X = L^2(\Omega)$ or $X = H_0^1(\Omega)$. Let $V_r^h = \text{span}\{\varphi_j\}_{j=1}^r$. Next, we develop the POD reduced order model of the heat equation by substituting $u_r$ for the unknown $u$, using the Galerkin method, and projecting \Cref{BEheateq} onto the space $V^h_r \subset V^h$. Thus we arrive at the following BE-POD-ROM:
\begin{equation}\label{BE-POD-ROM}
\left( \frac{u_r^{n+1}-u_r^n}{\Delta t}, v_r \right)_{L^2} + \nu (\nabla u_r^{n+1} , \nabla v_r )_{L^2} = (f^{n+1} , v_r)_{L^2} \quad \forall v_r \in V^h_r.
\end{equation}
We split the error, in the standard way, with
\begin{equation*}
e^{n+1} = u^{n+1}-u_r^{n+1} = (u^{n+1}-\pi_r u^{n+1}) - (u_r^{n+1}-\pi_r u^{n+1}) = \eta^{n+1} - \phi_r^{n+1}
\end{equation*}
where $\pi_r$ is a projection onto $V_r^h$, $\eta^{n+1} = u^{n+1}-\pi_r u^{n+1}$ is the POD projection error, and $\phi_r^{n+1} = u_r^{n+1}-\pi_r u^{n+1}$ is the discretization error.
We subtract \Cref{BE-POD-ROM} from \Cref{BEheateq} and make the error substitution given above to get
\begin{equation}\label{eqn:preRitzprojheateqn}
\left( \frac{\phi_r^{n+1}-\phi_r^n}{\Delta t} , v_r \right)_{L^2} + \nu ( \nabla \phi_r^{n+1} , \nabla v_r )_{L^2} = \left( \frac{\eta^{n+1}-\eta^n}{\Delta t} , v_r \right)_{L^2} + \nu ( \nabla \eta^{n+1} , \nabla v_r )_{L^2}  \quad \forall v_r \in V_r^h.
\end{equation}

\begin{remark}
	The approach taken in this work is different than the one taken in \cite{Koc2020}. In that work the authors compare the ROM solution $u_r^n$ to the exact solution of the PDE $u(t_n)$. We can bound the error between the ROM solution and the exact solution using the triangle inequality
	\begin{equation*}
	\|u_r^n - u(t_n)\|_Y \leq \|u_r^n - u^n \|_Y + \|u^n - u(t_n)\|_Y
	\end{equation*}
	where $Y$ is a Hilbert space. The current work focuses on bounding the ROM error term $\|u_r^n - u^n \|_Y$ with $Y = L^2(\Omega)$ or $Y = H_0^1(\Omega)$ and leaves the second term to be studied using well-known finite element theory. 
\end{remark}

For analysis and computations the initial condition is taken to be the POD projection of the given initial condition, i.e, $u_r^1 = \Pi_r^X u^1$. Other initial conditions are possible and have been considered in other works for the standard POD and DQ POD approaches. For example, in \cite{Koc2020}, the Ritz projection was used for the initial condition. 
We now consider each POD space, $L^2(\Omega)$ and $H_0^1(\Omega)$, separately. 
\subsection{POD Space: $L^2(\Omega)$}
In this section, we take $X$ to be the space $L^2(\Omega)$. The orthogonal projection onto $V_r^h$, $\Pi_r^X:L^2(\Omega) \to L^2(\Omega)$, is given by
\begin{equation}
\Pi_r^X u = \sum_{i=1}^r (u, \varphi_i)_{L^2} \varphi_i
\end{equation}
and the set of POD modes $\{\varphi_i \}$ are orthogonal in $L^2(\Omega)$.  
%In this scenario we know the following error formulas.
%\begin{lemma}\label{lem:PODL2errformulas}
%	\begin{equation}
%	\sum_{j=1}^{s} \gamma_j \|u^n -\Pi_r^X u^n \|_{L^2}^2 = \sum_{k>r} \lambda_k
%	\end{equation}
%	and 
%	\begin{equation}
%	\sum_{j=1}^{s} \gamma_j \|u^n -\Pi_r^X u^n \|_{H_0^1}^2 = \sum_{k>r} \lambda_k \|\nabla\varphi_k \|_{L^2}^2
%	\end{equation}	
%\end{lemma}  
Define $\pi_r$ to be the Ritz projection $R_r$ which satisfies 
\begin{equation}
(\nabla (w-R_r w), \nabla v_r)_{L^2} = 0 
\end{equation}
for all $v_r \in V_r^h$. Thus, for all $w \in V^h$ we have
\begin{equation*}
\|w-R_r w \|_{H_0^1} = \inf_{v_r \in V_r^h} \|w-v_r\|_{H_0^1}.
\end{equation*}
Let $\eta_{Ritz}^{n+1} = u^{n+1}-R_r u^{n+1}$. Then \Cref{eqn:preRitzprojheateqn} becomes
\begin{equation}\label{eqn:L2Ritzprojheateqn}
\left( \frac{\phi_r^{n+1}-\phi_r^n}{\Delta t} , v_r \right)_{L^2} + \nu ( \nabla \phi_r^{n+1} , \nabla v_r )_{L^2} = \left( \frac{\eta_{Ritz}^{n+1}-\eta_{Ritz}^n}{\Delta t} , v_r \right)_{L^2} \quad \forall v_r \in V_r^h.
\end{equation}

Using this ROM error equation, we obtain the error bounds given below. Note, the constant $C$ can change from step to step, but it does not depend on any discretization parameter. In \Cref{ssec:Ratios_Comp_ROM} we investigate the final value of $C$ computationally.

\begin{theorem}\label{thm:L2_L2ROM}
	The pointwise $L^2$ solution error when the $L^2(\Omega)$ POD basis is used for the BE-POD-ROM is bounded by
	\begin{equation}\label{equ:L2_L2ROM}
	\max_k \|e^k\|^2_{L^2} \leq C \left( \sum_{i=r+1}^s \lambda_i^{DQ1} \|\varphi_i - R_r \varphi_i \|_{L^2}^2 + \|\phi_r^1 \|_{L^2}^2 \right).
	\end{equation}
\end{theorem}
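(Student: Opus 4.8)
The plan is to carry out a standard backward-Euler energy estimate for the discretization error $\phi_r^n$ and then combine it with the new pointwise POD projection bounds established above. First I would test the ROM error equation \Cref{eqn:L2Ritzprojheateqn} with $v_r = \phi_r^{n+1} \in V_r^h$. The key structural feature is that the Ritz projection was chosen precisely so that the term $\nu(\nabla \eta_{Ritz}^{n+1}, \nabla v_r)_{L^2}$ vanishes, leaving only the time-difference term $\Delta t^{-1}(\eta_{Ritz}^{n+1} - \eta_{Ritz}^n, \phi_r^{n+1})_{L^2} = (\partial \eta_{Ritz}^n, \phi_r^{n+1})_{L^2}$ on the right-hand side. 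Using the polarization identity $(\phi_r^{n+1} - \phi_r^n, \phi_r^{n+1})_{L^2} = \tfrac12(\|\phi_r^{n+1}\|_{L^2}^2 - \|\phi_r^n\|_{L^2}^2 + \|\phi_r^{n+1} - \phi_r^n\|_{L^2}^2)$ together with Cauchy--Schwarz and Young's inequality on the right, I can discard the nonnegative terms $\|\phi_r^{n+1} - \phi_r^n\|_{L^2}^2$ and $2\nu \Delta t \|\nabla \phi_r^{n+1}\|_{L^2}^2$ to obtain an inequality of the form $\|\phi_r^{n+1}\|_{L^2}^2 - \|\phi_r^n\|_{L^2}^2 \leq \Delta t\,\|\partial \eta_{Ritz}^n\|_{L^2}^2 + \Delta t\,\|\phi_r^{n+1}\|_{L^2}^2$.

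Next I would sum this telescoping inequality from the first time level up to an arbitrary level $k$ and apply a discrete Gr\"onwall inequality to absorb the $\sum \Delta t\,\|\phi_r^{n+1}\|_{L^2}^2$ term, yielding
\[
\max_k \|\phi_r^k\|_{L^2}^2 \leq C\left( \|\phi_r^1\|_{L^2}^2 + \sum_{n=1}^{N-1} \Delta t\,\|\partial \eta_{Ritz}^n\|_{L^2}^2 \right).
\]
The crucial observation is that, by linearity of the Ritz projection and of the difference quotient, $\partial \eta_{Ritz}^n = \partial u^n - R_r \partial u^n$, so the difference-quotient sum appearing here is exactly the projection-error quantity controlled by the new POD approach. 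Applying \Cref{cor:totalbounds} (equivalently \Cref{lem:POD_DQ1_error}) with $Y = L^2(\Omega)$ and $\pi_r = R_r$ bounds $\sum_{n=1}^{N-1}\Delta t\,\|\partial u^n - R_r\partial u^n\|_{L^2}^2$ by $C\sum_{i=r+1}^s \lambda_i^{DQ1}\|\varphi_i - R_r\varphi_i\|_{L^2}^2$.

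Finally, I would recover the full error via the splitting $e^k = \eta_{Ritz}^k - \phi_r^k$ and the elementary bound $\|e^k\|_{L^2}^2 \leq 2\|\eta_{Ritz}^k\|_{L^2}^2 + 2\|\phi_r^k\|_{L^2}^2$. The discretization-error term is already controlled by the previous step, while the projection-error term $\max_k \|\eta_{Ritz}^k\|_{L^2}^2 = \max_k \|u^k - R_r u^k\|_{L^2}^2$ is bounded directly by the pointwise estimate \Cref{thm:POD_DQ1_pointwise_error} with $Y = L^2(\Omega)$ and $\pi_r = R_r$, again giving $C\sum_{i=r+1}^s \lambda_i^{DQ1}\|\varphi_i - R_r\varphi_i\|_{L^2}^2$. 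Collecting the two contributions produces \Cref{equ:L2_L2ROM}. The main obstacle is the discrete Gr\"onwall step: one must verify that the hidden constant $C$ stays independent of $\Delta t$, $N$, and $r$ (which typically requires a mild smallness condition on $\Delta t$), and one must correctly identify $\partial\eta_{Ritz}^n$ with the difference-quotient projection error so that the new POD bounds in \Cref{cor:totalbounds} and \Cref{thm:POD_DQ1_pointwise_error} apply verbatim. This identification is the linchpin that transfers the DQ-based projection estimates of the new approach into the ROM error bound without reintroducing the redundant snapshots of the standard DQ method.
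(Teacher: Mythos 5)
Your proposal is correct and shares the paper's overall skeleton: test \Cref{eqn:L2Ritzprojheateqn} with $v_r=\phi_r^{n+1}$, telescope the resulting estimate, split $e^k=\eta_{Ritz}^k-\phi_r^k$, and convert both the difference-quotient sum $\sum_n\Delta t\,\|\partial\eta_{Ritz}^n\|_{L^2}^2$ and the pointwise term $\|\eta_{Ritz}^k\|_{L^2}^2$ into POD eigenvalue sums via \Cref{lem:POD_DQ1_error} and \Cref{thm:POD_DQ1_pointwise_error} with $\pi_r=R_r$ and $Y=L^2(\Omega)$. The genuine difference is the absorption step. You discard the viscous term $\nu\Delta t\,\|\nabla\phi_r^{n+1}\|_{L^2}^2$ and control the Young term $\Delta t\,\|\phi_r^{n+1}\|_{L^2}^2$ with a discrete Gr\"onwall inequality; this works, but in the implicit (backward Euler) setting it requires a smallness condition such as $\Delta t\le 1/2$ to move the $n=k-1$ term to the left-hand side, and it produces a constant that grows like $e^{CT}$. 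The paper instead keeps the viscous term and uses the Poincar\'e inequality $C\|\phi_r^{n+1}\|_{L^2}^2\le\|\nabla\phi_r^{n+1}\|_{L^2}^2$ to absorb the Young term with the choice $\delta=\nu/C$; this avoids Gr\"onwall entirely, imposes no restriction on $\Delta t$ (so the bound is unconditional, consistent with the paper's claim that $C$ is independent of all discretization parameters), and yields a constant with no exponential factor. One small correction: the quantity $\sum_{n=1}^{N-1}\Delta t\,\|\partial u^n-R_r\partial u^n\|_{L^2}^2$ is controlled by the third identity of \Cref{lem:POD_DQ1_error}, not by \Cref{cor:totalbounds}, which bounds the weighted sum over the regular snapshots $u^j$ rather than over the difference quotients, so the two references in your parenthetical are not interchangeable; the identification $\partial\eta_{Ritz}^n=\partial u^n-R_r\partial u^n$ that you rightly call the linchpin is, however, exactly how the paper applies its projection estimates.
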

\begin{proof}
	Taking $v_r = \phi_r^{n+1}$ in \Cref{eqn:L2Ritzprojheateqn} yields
	\begin{equation}
	\left( \frac{\phi_r^{n+1}-\phi_r^n}{\Delta t},\phi_r^{n+1} \right)_{L^2} + \nu\|\nabla \phi_r^{n+1} \|_{L^2}^2 =  \left( \frac{\eta_{Ritz}^{n+1}- \eta_{Ritz}^n}{\Delta t}, \phi_r^{n+1} \right)_{L^2}.
	\end{equation}
	Now, apply Cauchy-Schwartz followed by Young's inequality with a constant $\delta$ to get
%	\begin{equation}
%	\left( \frac{\phi_r^{n+1}-\phi_r^n}{\Delta t},\phi_r^{n+1} \right)_{L^2} + \nu\|\nabla \phi_r^{n+1} \|_{L^2}^2 \leq \|\partial \eta_{Ritz}^n\|_{L^2} \|\phi_r^{n+1}\|_{L^2}
%	\end{equation}
%	followed by Young's inequality to get
	\begin{equation}
	\left( \frac{\phi_r^{n+1}-\phi_r^n}{\Delta t},\phi_r^{n+1} \right)_{L^2} + \nu\|\nabla \phi_r^{n+1} \|_{L^2}^2 \leq \frac{1}{4\delta} \|\partial \eta_{Ritz}^n\|_{L^2}^2 + \delta \|\phi_r^{n+1}\|_{L^2}^2.
	\end{equation}
	Finally apply a polarization identity, use the fact that $C \|\phi_r^{n+1}\|_{L^2}^2 \leq \|\nabla \phi_r^{n+1}\|_{L^2}^2$, and rearrange to obtain
	\begin{equation}
	\left(\frac{1}{2\Delta t} - \delta +\frac{\nu}{C} \right) \|\phi_r^{n+1}\|_{L^2}^2 - \frac{1}{2\Delta t} \|\phi_r^n\|_{L^2}^2 \leq \frac{1}{4\delta} \|\partial \eta_{Ritz}^n\|_{L^2}^2.
	\end{equation}
	Taking $\delta = \frac{\nu}{C}$ and multiplying by $2\Delta t$ we get
	\begin{equation}
	 \|\phi_r^{n+1}\|_{L^2}^2 - \|\phi_r^n\|_{L^2}^2 \leq \frac{C\Delta t}{2\nu} \|\partial \eta_{Ritz}^n\|_{L^2}^2.
	\end{equation}
	Summing from $n=1$ to $n=k-1$, using $\|e^n \|_{L^2}^2 \leq 2 ( \|\eta^n\|_{L^2}^2 + \|\phi_r^n\|_{L^2}^2 )$, and rearranging once again gives
	\begin{equation}
	\|e^k\|_{L^2}^2 \leq C \left( \Delta t \sum_{n=1}^{k-1} \|\partial \eta_{Ritz}^n\|_{L^2}^2 + \|\eta_{Ritz}^k\|_{L^2}^2 + \|\phi_r^1\|_{L^2}^2 \right).
	\end{equation}
	Then apply \Cref{lem:POD_DQ1_error} and \Cref{thm:POD_DQ1_pointwise_error} with $Y = L^2(\Omega)$ and $\pi_r = R_r$ to get 
	\begin{equation}
	\|e^k\|_{L^2}^2 \leq C \left( \sum_{i=r+1}^s \lambda_i^{DQ1} \|\varphi_i - R_r \varphi_i \|_{L^2}^2 + \|\phi_r^1 \|_{L^2}^2 \right).
	\end{equation}
	Take the maximum over all $k$ to get the result. 
\end{proof}

\begin{theorem}\label{thm:H1_L2ROM}
	The pointwise $H_0^1$ solution error when the $L^2(\Omega)$ POD basis is used for the BE-POD-ROM is bounded by
	\begin{equation}\label{eqn:H1_L2ROM}
	\max_k \| \nabla e^k\|^2_{L^2} \leq C \left( \sum_{i=r+1}^{s} \lambda_i^{DQ1} \left( \|\varphi_i-R_r\varphi_i\|_{L^2}^2 + \|\nabla (\varphi_i-R_r\varphi_i)\|_{L^2}^2  \right) + \|\phi_r^1\|_{H_0^1}^2 \right).
	\end{equation}
\end{theorem}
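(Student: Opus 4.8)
The plan is to imitate the proof of \Cref{thm:L2_L2ROM}, but to test the ROM error equation \Cref{eqn:L2Ritzprojheateqn} with the discrete time derivative $v_r = \partial \phi_r^n = (\phi_r^{n+1}-\phi_r^n)/\Delta t$ instead of with $\phi_r^{n+1}$. This change of test function is exactly what converts control of $\phi_r$ in $L^2$ into control of $\nabla \phi_r$. Substituting $v_r = \partial\phi_r^n$ gives
\[
\|\partial \phi_r^n\|_{L^2}^2 + \nu(\nabla \phi_r^{n+1}, \nabla \partial \phi_r^n)_{L^2} = (\partial \eta_{Ritz}^n, \partial \phi_r^n)_{L^2},
\]
where the right-hand side uses $\partial \eta_{Ritz}^n = (\eta_{Ritz}^{n+1}-\eta_{Ritz}^n)/\Delta t$. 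First I would bound the right-hand side with Cauchy--Schwarz and Young's inequality, and absorb the resulting $\tfrac12\|\partial\phi_r^n\|_{L^2}^2$ into the left-hand side.

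Next I would rewrite the viscous term with the polarization identity $(\nabla \phi_r^{n+1}, \nabla(\phi_r^{n+1}-\phi_r^n))_{L^2} = \tfrac12(\|\nabla\phi_r^{n+1}\|_{L^2}^2 - \|\nabla\phi_r^n\|_{L^2}^2 + \|\nabla(\phi_r^{n+1}-\phi_r^n)\|_{L^2}^2)$. Dropping the two nonnegative terms $\tfrac12\|\partial\phi_r^n\|_{L^2}^2$ and $\tfrac{\nu}{2\Delta t}\|\nabla(\phi_r^{n+1}-\phi_r^n)\|_{L^2}^2$ leaves the telescoping inequality $\|\nabla\phi_r^{n+1}\|_{L^2}^2 - \|\nabla\phi_r^n\|_{L^2}^2 \le \tfrac{\Delta t}{\nu}\|\partial\eta_{Ritz}^n\|_{L^2}^2$. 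Summing from $n=1$ to $k-1$ telescopes the left-hand side to give $\|\nabla\phi_r^k\|_{L^2}^2 \le \|\nabla\phi_r^1\|_{L^2}^2 + \tfrac{1}{\nu}\Delta t\sum_{n=1}^{k-1}\|\partial\eta_{Ritz}^n\|_{L^2}^2$. I would then split $\nabla e^k = \nabla\eta_{Ritz}^k - \nabla\phi_r^k$ so that $\|\nabla e^k\|_{L^2}^2 \le 2\|\nabla\eta_{Ritz}^k\|_{L^2}^2 + 2\|\nabla\phi_r^k\|_{L^2}^2$, and insert the telescoped bound for the final term, noting that $\|\nabla\phi_r^1\|_{L^2}^2 = \|\phi_r^1\|_{H_0^1}^2$.

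Finally I would invoke the two data-approximation results of \Cref{sec:New_POD_DQ}. The term $\|\nabla\eta_{Ritz}^k\|_{L^2}^2 = \|u^k - R_r u^k\|_{H_0^1}^2$ is controlled by the pointwise projection bound \Cref{thm:POD_DQ1_pointwise_error} with $Y = H_0^1(\Omega)$ and $\pi_r = R_r$, producing the $\|\nabla(\varphi_i - R_r\varphi_i)\|_{L^2}^2$ contribution. For the difference-quotient sum I would use that $R_r$ is linear and hence commutes with $\partial$, so $\partial\eta_{Ritz}^n = \partial u^n - R_r\partial u^n$; bounding the partial sum $\sum_{n=1}^{k-1}$ by the full sum $\sum_{n=1}^{N-1}$ and applying the error formula in \Cref{lem:POD_DQ1_error} with $Y = L^2(\Omega)$ and $\pi_r = R_r$ yields the $\|\varphi_i - R_r\varphi_i\|_{L^2}^2$ contribution. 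Collecting constants and taking the maximum over $k$ gives \Cref{eqn:H1_L2ROM}. I expect the genuine idea to be the choice of test function $v_r = \partial\phi_r^n$; the delicate part is then the bookkeeping in the polarization step, where one must keep the nonnegative term $\|\nabla(\phi_r^{n+1}-\phi_r^n)\|_{L^2}^2$ on the left so the telescoping inequality points in the right direction, and must be careful to route the $H_0^1$ norm to the projection-error term and the $L^2$ norm to the difference-quotient term. Everything else is a routine combination of the previously established formulas.
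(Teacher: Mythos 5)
Your proposal is correct and follows essentially the same route as the paper's proof: the key choice of test function $v_r = \partial \phi_r^n$, the Cauchy--Schwarz/Young step, the polarization identity on $\nu(\nabla\phi_r^{n+1},\nabla\partial\phi_r^n)_{L^2}$, the telescoping sum, the splitting $\|\nabla e^k\|_{L^2}^2 \leq 2\|\nabla\eta_{Ritz}^k\|_{L^2}^2 + 2\|\nabla\phi_r^k\|_{L^2}^2$, and the final application of \Cref{lem:POD_DQ1_error} (with $Y=L^2$) and \Cref{thm:POD_DQ1_pointwise_error} (with $Y=H_0^1$) all match. The only cosmetic difference is that the paper chooses the Young constant $\delta = 1$ to cancel the $\|\partial\phi_r^n\|_{L^2}^2$ term exactly, whereas you absorb half of it and drop the remainder; the two are equivalent.
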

\begin{proof}
	Taking $v_r = \partial \phi_r^{n}$ in \Cref{eqn:L2Ritzprojheateqn} 
	gives
	\begin{equation}
	\left( \frac{\phi_r^{n+1} - \phi_r^n}{\Delta t}, \partial \phi_r^{n} \right)_{L^2} + \nu \left( \nabla \phi_r^{n+1}, \nabla \partial \phi_r^{n} \right)_{L^2} = \left( \frac{\eta_{Ritz}^{n+1} - \eta_{Ritz}^n}{\Delta t}, \partial \phi_r^{n} \right)_{L^2}.
	\end{equation}
	Rearranging and using the definition of $\partial \phi_r^{n}$ yields
	\begin{equation}
	\nu \left( \nabla \phi_r^{n+1}, \nabla \partial \phi_r^{n} \right)_{L^2} = \left( \frac{\eta_{Ritz}^{n+1} - \eta_{Ritz}^n}{\Delta t}, \partial \phi_r^{n} \right)_{L^2} - \|\partial \phi_r^{n} \|_{L^2}^2.
	\end{equation}
	Then applying first the Cauchy-Schwartz inequality followed by Young's inequality with the constant $\delta$ we obtain 
	\begin{align*}
	\nu \left( \nabla \phi_r^{n+1}, \nabla \partial \phi_r^{n} \right)_{L^2} & \leq  \left( \|\partial \eta_{Ritz}^n \|_{L^2} \|\partial \phi_r^{n+1}\|_{L^2} \right) - \|\partial \phi_r^{n} \|_{L^2}^2 \\
	& \leq  \left( \frac{1}{4\delta}\|\partial \eta_{Ritz}^n \|_{L^2}^2 + \delta \|\partial \phi_r^{n}\|_{L^2}^2 \right) - \|\partial \phi_r^{n} \|_{L^2}^2  \\
	& = \frac{1}{4\delta} \|\partial \eta_{Ritz}^n \|_{L^2}^2 + (\delta-1) \|\partial \phi_r^{n} \|_{L^2}^2.
	\end{align*}
	Taking the constant $\delta = 1$ yields
	\begin{equation}\label{eqn:1}
	\nu \left( \nabla \phi_r^{n+1}, \nabla \partial \phi_r^{n} \right)_{L^2} \leq \frac{1}{4} \|\partial \eta_{Ritz}^n \|_{L^2}^2.
	\end{equation}	
%	Thus 
%	\begin{equation}
%	\Delta t  \left( \nabla \phi_r^{n+1} , \nabla \partial \phi_r^{n+1} \right)_{L^2} \leq \frac{C}{\nu} \Delta t \|\partial \eta_{Ritz}^n \|_{L^2}^2.
%	\end{equation}
	Also,
	\begin{equation}\label{eqn:2}
	\Delta t \left( \nabla \phi_r^{n+1} , \nabla \partial \phi_r^{n} \right)_{L^2} = \frac{1}{2} \left( \|\nabla \phi_r^{n+1} \|_{L^2}^2 + \|\nabla \phi_r^{n+1}- \nabla\phi_r^{n}\|_{L^2}^2  - \|\nabla \phi_r^{n}\|_{L^2}^2 \right).
	\end{equation}
	Combining \Cref{eqn:1} and \Cref{eqn:2}, summing over $n = 1,\ldots ,k-1$, and using $\|\nabla e^k\|_{L^2}^2 \leq 2\left( \|\nabla \eta^k\|_{L^2}^2 + \|\nabla \phi_r^k\|_{L^2}^2  \right) $ gives
	\begin{align}
		\|\nabla e^k \|_{L^2}^2 %&\leq C \left( \Delta t \sum_{n=1}^{k-1}\|\partial \eta_{Ritz}^n\|_{L^2}^2 + \|\nabla \eta_{Ritz}^k \|_{L^2}^2 + \|\nabla \phi_r^1\|_{L^2}^2 \right) \\
		& \leq C \left( \Delta t \sum_{n=1}^{k-1}\|\partial \eta_{Ritz}^n\|_{L^2}^2 + \| \eta_{Ritz}^k \|_{H_0^1}^2 + \| \phi_r^1\|_{H_0^1}^2 \right).
	\end{align}
		Then apply \Cref{lem:POD_DQ1_error} and \Cref{thm:POD_DQ1_pointwise_error} with $Y = H_0^1(\Omega)$ and $\pi_r = R_r$ to get 
		\begin{equation}
		\|\nabla e^k \|_{L^2}^2 \leq C \left( \sum_{i=r+1}^{s} \lambda_i^{DQ1} \|\varphi_i-R_r\varphi_i\|_{L^2}^2 + \sum_{i=r+1}^{s} \lambda_i^{DQ1} \|\varphi_i-R_r\varphi_i\|_{H_0^1}^2  + \|\phi_r^1\|_{H_0^1}^2 \right).
		\end{equation}
		Rearrange and take the maximum over all $k$ to get the result. 
\end{proof}
%Note: See also \cite[Lemma 4.5]{Koc2020}.

\begin{theorem}\label{thm:sol_L2ROM}
	The pointwise solution norm error when the $L^2(\Omega)$ POD basis is used for the BE-POD-ROM is bounded by
	\begin{equation}
	 \|e^{N}\|_{L^2}^2 +\nu \Delta t \sum_{n=1}^{N-1} \|\nabla e^{n+1}\|_{L^2}^2  \leq C \left( \sum_{i=r+1}^{s} \lambda^{DQ1} (\|\varphi_i-R_r\varphi_i\|_{L^2}^2 + \|\varphi_i-R_r\varphi_i\|_{H_0^1}^2) + \|\phi_r^1\|_{L^2}^2\right).
	\end{equation}
\end{theorem}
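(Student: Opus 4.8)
The plan is to run the standard energy argument on the ROM error equation \Cref{eqn:L2Ritzprojheateqn}, but—in contrast to the proof of \Cref{thm:L2_L2ROM}—to retain the diffusion term $\nu\|\nabla\phi_r^{n+1}\|_{L^2}^2$ rather than discard it, since that term must survive on the left-hand side of the claimed bound. First I would set $v_r=\phi_r^{n+1}$ in \Cref{eqn:L2Ritzprojheateqn} (the same test function as in \Cref{thm:L2_L2ROM}), apply the polarization identity to the discrete time-difference term, and estimate the right-hand side inner product $(\partial\eta_{Ritz}^n,\phi_r^{n+1})_{L^2}$ by Cauchy--Schwarz followed by Young's inequality with a parameter $\delta$.

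The key step is how to absorb the $\|\phi_r^{n+1}\|_{L^2}$ factor produced by Young's inequality. Rather than leaving it as a zeroth-order term (which would force a discrete Gr\"onwall argument and an $e^{CT}$ constant), I would route it into the diffusion term using the Poincar\'e-type bound $C\|\phi_r^{n+1}\|_{L^2}^2 \le \|\nabla\phi_r^{n+1}\|_{L^2}^2$ already invoked in the excerpt, and then choose $\delta$ small enough (so the resulting coefficient is $\nu/2$) that this term is absorbed into half of the diffusion term on the left. After multiplying by $2\Delta t$, summing from $n=1$ to $N-1$, telescoping the $\|\phi_r^{n+1}\|_{L^2}^2-\|\phi_r^n\|_{L^2}^2$ differences, and dropping the nonnegative term $\|\phi_r^{n+1}-\phi_r^n\|_{L^2}^2$, this yields the energy estimate
\[
\|\phi_r^N\|_{L^2}^2 + \nu\,\Delta t\sum_{n=1}^{N-1}\|\nabla\phi_r^{n+1}\|_{L^2}^2 \leq \|\phi_r^1\|_{L^2}^2 + C\,\Delta t\sum_{n=1}^{N-1}\|\partial\eta_{Ritz}^n\|_{L^2}^2
\]
with no exponential-in-$T$ factor.

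To pass from $\phi_r$ to $e$ I would use the splitting $e^n=\eta_{Ritz}^n-\phi_r^n$ together with $\|a-b\|^2\le 2\|a\|^2+2\|b\|^2$ applied to both $\|e^N\|_{L^2}^2$ and each $\|\nabla e^{n+1}\|_{L^2}^2$, which reduces the claim to the energy estimate above plus the two projection-error contributions $\|\eta_{Ritz}^N\|_{L^2}^2$ and $\nu\,\Delta t\sum_{n=1}^{N-1}\|\nabla\eta_{Ritz}^{n+1}\|_{L^2}^2$. The finishing step is to match each remaining piece to the correct POD error result: the accumulated difference-quotient term $\Delta t\sum\|\partial\eta_{Ritz}^n\|_{L^2}^2$ is exactly the left-hand side of \Cref{lem:POD_DQ1_error} with $Y=L^2$ and $\pi_r=R_r$ (using $\partial\eta_{Ritz}^n=\partial u^n-R_r\,\partial u^n$); the pointwise term $\|\eta_{Ritz}^N\|_{L^2}^2$ is bounded by \Cref{thm:POD_DQ1_pointwise_error} with $Y=L^2$; and the time-summed gradient term $\Delta t\sum\|\eta_{Ritz}^{n+1}\|_{H_0^1}^2$ is controlled by \Cref{cor:totalbounds} with $Y=H_0^1$, which supplies precisely the $\sum_i\lambda_i^{DQ1}\|\varphi_i-R_r\varphi_i\|_{H_0^1}^2$ structure.

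The hard part will be the second step: keeping the diffusion term while still closing the estimate cleanly. The main obstacle is resisting the natural temptation to treat $\|\phi_r^{n+1}\|_{L^2}^2$ as lower order and apply discrete Gr\"onwall; the Poincar\'e-absorption route is what keeps $C$ independent of the discretization. A secondary subtlety is the bookkeeping over which projection-error bound governs which term—in particular using the total-error \Cref{cor:totalbounds}, and not the pointwise \Cref{thm:POD_DQ1_pointwise_error}, for the summed-in-time gradient of $\eta_{Ritz}$, so that the $H_0^1$ contribution appears in the stated form.
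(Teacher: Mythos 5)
Your proposal is correct and follows essentially the same route as the paper's proof: the same test function $v_r=\phi_r^{n+1}$, the same Poincar\'e-based absorption of the Young term into the retained diffusion term (with $\delta$ proportional to $\nu$, avoiding any Gr\"onwall/exponential factor), the same telescoping sum and error splitting, and the same final assignment of \Cref{lem:POD_DQ1_error} ($Y=L^2$), \Cref{thm:POD_DQ1_pointwise_error} ($Y=L^2$), and \Cref{cor:totalbounds} ($Y=H_0^1$) to the three residual $\eta_{Ritz}$ terms. The only differences are cosmetic (explicitly dropping the $\|\phi_r^{n+1}-\phi_r^n\|_{L^2}^2$ polarization term and summing directly to $n=N-1$ rather than to a generic $k-1$).
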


\begin{proof}
	Taking $\phi_r^{n+1}$ in \Cref{eqn:L2Ritzprojheateqn}, yields
	\begin{equation}
	\left( \frac{\phi_r^{n+1}-\phi_r^n}{\Delta t},\phi_r^{n+1} \right)_{L^2} + \nu\|\nabla \phi_r^{n+1} \|_{L^2}^2 =  \left( \frac{\eta_{Ritz}^{n+1}- \eta_{Ritz}^n}{\Delta t}, \phi_r^{n+1} \right)_{L^2}.
	\end{equation}
	Now, apply the Cauchy-Schwartz inequality, Young's inequality, and a polarization identity to get
	\begin{equation}
	\frac{1}{2\Delta t} \left( \|\phi_r^{n+1}\|_{L^2}^2 - \|\phi_r^n\|_{L^2}^2 \right) + \nu \|\nabla \phi_r^{n+1}\|_{L^2}^2 \leq \frac{1}{4\delta} \|\partial \eta_{Ritz}^n\|_{L^2}^2 + \delta \|\phi_r^{n+1}\|_{L^2}^2.
	\end{equation}
	Now multiply by 2, apply the Poincar\'e inequality to the last term, and combine the resulting like terms:
	\begin{equation}
	\frac{1}{\Delta t} \left( \|\phi_r^{n+1}\|_{L^2}^2 - \|\phi_r^n\|_{L^2}^2 \right) + 2\left(\nu - \delta C \right) \|\nabla \phi_r^{n+1}\|_{L^2}^2 \leq \frac{1}{2\delta} \|\partial \eta_{Ritz}^n\|_{L^2}^2.
	\end{equation}
	Take $\delta = \frac{\nu}{2C}$ and multiply by $\Delta t$:
	\begin{equation}
	 \|\phi_r^{n+1}\|_{L^2}^2 - \|\phi_r^n\|_{L^2}^2  + \nu \Delta t \|\nabla \phi_r^{n+1}\|_{L^2}^2 \leq \frac{C \Delta t}{\nu} \|\partial \eta_{Ritz}^n\|_{L^2}^2.
	\end{equation}
	Sum from $n =1$ to $k-1$, rearrange, and take a maximum amoung the constants to obtain
	\begin{equation}
	\|\phi_r^{k}\|_{L^2}^2 +\nu \Delta t \sum_{n=1}^{k-1} \|\nabla \phi_r^{n+1}\|_{L^2}^2 \leq C \left( \Delta t  \sum_{n=1}^{k-1} \|\partial \eta_{Ritz}^n\|_{L^2}^2+ \|\phi_r^1\|_{L^2}^2 \right).
	\end{equation}
	Using $\|e^n \|_{L^2}^2 \leq 2 ( \|\eta^n\|_{L^2}^2 + \|\phi_r^n\|_{L^2}^2 )$ and rearranging gives
	\begin{equation}
	\|e^{k}\|_{L^2}^2 +\nu \Delta t \sum_{n=1}^{k-1} \| e^{n+1}\|_{H_0^1}^2 \leq C \left( \Delta t  \sum_{n=1}^{k-1} \|\partial \eta_{Ritz}^n\|_{L^2}^2+\|\eta_{Ritz}^{k}\|_{L^2}^2+ \Delta t \sum_{n=1}^{k-1} \| \eta_{Ritz}^{n+1}\|_{H_0^1}^2 + \|\phi_r^1\|_{L^2}^2  \right).
	\end{equation}
	Finally, using \Cref{lem:POD_DQ1_error},  \Cref{thm:POD_DQ1_pointwise_error}, and \Cref{cor:totalbounds} with $\pi_r = R_r$ and both $Y = L^2(\Omega) $ and $Y= H_0^1(\Omega)$ yield
	\begin{equation}
	\|e^{k}\|_{L^2}^2 +\nu \Delta t \sum_{n=1}^{k-1} \| e^{n+1}\|_{H_0^1}^2 \leq C \left( \sum_{i=r+1}^{s} \lambda^{DQ1} (\|\varphi_i-R_r\varphi_i\|_{L^2}^2 + \|\varphi_i-R_r\varphi_i\|_{H_0^1}^2) + \|\phi_r^1\|_{L^2}^2\right).
	\end{equation}
	Taking $k=N$ yields the result.
\end{proof}

\subsection{POD Space: $H^1_0(\Omega)$}
Alternatively, we can take the POD space $X$ to be $H^1_0(\Omega)$. The orthogonal projection onto $V_r^h$, $\Pi_r^X:H^1_0(\Omega) \to H^1_0(\Omega)$, is given by
\begin{equation}
\Pi_r^X u = \sum_{i=1}^r (u, \varphi_i)_{H^1_0} \varphi_i
\end{equation}
and the set of POD modes $\{\varphi_i \}$ are orthogonal in $H^1_0(\Omega)$. 
%We can also consider the projection $\Pi_r^X: Y \to Y$. This projection \blue{might be?} is non-orthogonal but still projects onto $X_r$. 
%In this scenario we know the following error formulas.
%\begin{lemma}\label{lem:PODH1errformulas}
%	\begin{equation}
%	\sum_{j=1}^{s} \gamma_j \|u^n -\Pi_r^X u^n \|_{H^1_0}^2 = \sum_{k>r} \lambda_k
%	\end{equation}
%	and 
%	\begin{equation}
%	\sum_{j=1}^{s} \gamma_j \|u^n -\Pi_r^X u^n \|_{L^2}^2 = \sum_{k>r} \lambda_k \|\varphi_k \|_{L^2}^2
%	\end{equation}	
%\end{lemma}  
Note that $\Pi_r^X \varphi_k = \sum_{i=1}^r (\varphi_k, \varphi_i)_{H^1_0} \varphi_i = 0$ for $k >r$ since $\{ \varphi_k \}$ are orthogonal in $H^1_0(\Omega)$. Further, since $\Pi_r^X$ is orthogonal we have
\begin{align*}
& (w-\Pi_r^X w , v_r)_{X} = 0 \quad \forall v_r \in V_r^h \\
\implies & (w-\Pi_r^X w , v_r)_{H^1_0} = 0 \quad \forall v_r \in V_r^h \\
\implies & ( \nabla (w-\Pi_r^X w), \nabla v_r )_{L^2} = 0 \quad \forall v_r \in V_r^h.
\end{align*}
Similarly to before we take $\pi_r$ to be the Ritz projection, but in this case the Ritz projection is the orthogonal projection $\Pi_r^X$, and \Cref{eqn:preRitzprojheateqn} becomes
\begin{equation}
\left( \frac{\phi_r^{n+1}-\phi_r^n}{\Delta t} , v_r \right)_{L^2} + \nu ( \nabla \phi_r^{n+1} , \nabla v_r )_{L^2} = \left( \frac{\eta_{Ritz}^{n+1}-\eta_{Ritz}^n}{\Delta t} , v_r \right)_{L^2}  \quad \forall v_r \in V_r^h
\end{equation}
where $\eta_{Ritz}^{n+1} = u^{n+1} - R_r u^{n+1} = u^{n+1} - \Pi_r^X u^{n+1}$.
As before we can show error bounds for the $L^2$ and $H_0^1$ norm errors and the solution norm error.

\begin{theorem}
	The ROM solution errors when the $H_0^1(\Omega)$ POD basis is used for the BE-POD-ROM are bounded as follows:
	\begin{equation}\label{eqn:L2_H1ROM}
	\max_{k} \|e^k\|^2_{L^2} \leq C \left( \sum_{i=r+1}^s \lambda_i^{DQ1} \|\varphi_i \|_{L^2}^2 + \|\phi_r^1 \|_{L^2}^2 \right),
	\end{equation}
	\begin{equation}\label{eqn:H1_H1ROM}
	\max_{k} \|\nabla e^k\|^2_{L^2} \leq C \left( \sum_{i=r+1}^{s} \lambda_i^{DQ1} (\|\varphi_i\|_{L^2}^2 + 1) +\|\phi_r^1\|_{H_0^1}^2 \right),
	\end{equation}
	and
	\begin{equation}
	 \|e^{N}\|_{L^2}^2 +\nu \Delta t \sum_{n=1}^{N-1} \|\nabla e^{n+1}\|_{L^2}^2  \leq C \left( \sum_{i=r+1}^{s} \lambda^{DQ1} (1 + \|\varphi_i\|_{L^2}^2) + \|\phi_r^1\|_{L^2}^2\right).
	\end{equation}
\end{theorem}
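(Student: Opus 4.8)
The plan is to prove the three bounds by repeating, essentially verbatim, the energy arguments of \Cref{thm:L2_L2ROM}, \Cref{thm:H1_L2ROM}, and \Cref{thm:sol_L2ROM}, respectively. This works because the $H^1_0$ ROM error equation displayed just above the theorem has exactly the same form as \Cref{eqn:L2Ritzprojheateqn}: the right-hand side again involves only the difference quotient $\partial \eta_{Ritz}^n$ of the Ritz projection error, with no separate $\nu(\nabla \eta_{Ritz}^{n+1},\nabla v_r)_{L^2}$ term. The one structural feature that both simplifies the $H^1_0$ case and produces the characteristic ``$+1$'' terms is that here the Ritz projection $R_r$ coincides with the orthogonal projection $\Pi_r^X$, and the POD modes $\{\varphi_i\}$ are orthonormal in $H^1_0(\Omega)$. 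Hence, for $i>r$ we have $\varphi_i - R_r\varphi_i = \varphi_i - \Pi_r^X\varphi_i = \varphi_i$ and $\|\varphi_i\|_{H^1_0}=1$.

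For \Cref{eqn:L2_H1ROM} I would take $v_r = \phi_r^{n+1}$, apply Cauchy--Schwarz and Young's inequality, use the polarization identity together with the Poincar\'e-type bound $C\|\phi_r^{n+1}\|_{L^2}^2 \le \|\nabla\phi_r^{n+1}\|_{L^2}^2$, choose $\delta = \nu/C$, telescope from $n=1$ to $k-1$, and split $\|e^n\|_{L^2}^2 \le 2(\|\eta^n\|_{L^2}^2 + \|\phi_r^n\|_{L^2}^2)$, exactly as in \Cref{thm:L2_L2ROM}. This gives
\[
\|e^k\|_{L^2}^2 \le C\Big(\Delta t\sum_{n=1}^{k-1}\|\partial\eta_{Ritz}^n\|_{L^2}^2 + \|\eta_{Ritz}^k\|_{L^2}^2 + \|\phi_r^1\|_{L^2}^2\Big).
\]
Applying \Cref{lem:POD_DQ1_error} and \Cref{thm:POD_DQ1_pointwise_error} with $Y=L^2(\Omega)$ and $\pi_r = R_r = \Pi_r^X$, the projection errors collapse via $\|\varphi_i - R_r\varphi_i\|_{L^2}^2 = \|\varphi_i\|_{L^2}^2$, producing exactly the right-hand side of \Cref{eqn:L2_H1ROM}.

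For \Cref{eqn:H1_H1ROM} I would follow \Cref{thm:H1_L2ROM}: take $v_r = \partial\phi_r^n$, run the Cauchy--Schwarz/Young step with $\delta=1$, use the polarization identity for $\Delta t(\nabla\phi_r^{n+1},\nabla\partial\phi_r^n)_{L^2}$, sum over $n$, and split $\|\nabla e^k\|_{L^2}^2 \le 2(\|\nabla\eta^k\|_{L^2}^2 + \|\nabla\phi_r^k\|_{L^2}^2)$ to reach a bound whose first term is measured in $L^2$ and whose $\eta_{Ritz}^k$ term is measured in $H^1_0$. The third estimate follows the argument of \Cref{thm:sol_L2ROM} unchanged, again keeping an $L^2$ contribution and an $H^1_0$ contribution of $\eta_{Ritz}$. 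In both cases the decisive point is the final application of the DQ1 error results: the $H^1_0$ contribution yields $\sum_{i=r+1}^s\lambda_i^{DQ1}\|\varphi_i - \Pi_r^X\varphi_i\|_{H^1_0}^2 = \sum_{i=r+1}^s\lambda_i^{DQ1}\|\varphi_i\|_{H^1_0}^2 = \sum_{i=r+1}^s\lambda_i^{DQ1}$, which is the source of the ``$+1$'' (equivalently the factor $\|\varphi_i\|_{L^2}^2 + 1$) in \Cref{eqn:H1_H1ROM} and in the third estimate.

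Since all the energy estimates are identical in structure to those already established, there is no genuine analytic obstacle; the work is entirely bookkeeping. The one place I would be most careful is tracking the projections: I must use that in the $H^1_0$ POD space the Ritz projection really is $\Pi_r^X$ (verified just above the theorem) so that $\eta_{Ritz} = u - \Pi_r^X u$ and \Cref{thm:POD_DQ1_pointwise_error} and \Cref{cor:totalbounds} apply with the orthogonal projection, and then keep straight which terms live in $L^2$ versus $H^1_0$ so that the orthonormality $\|\varphi_i\|_{H^1_0}=1$ is invoked only for the $H^1_0$ contributions. No other step should present difficulty.
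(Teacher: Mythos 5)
Your proposal is correct and follows essentially the same route as the paper, whose proof simply says to repeat the arguments of \Cref{thm:L2_L2ROM}, \Cref{thm:H1_L2ROM}, and \Cref{thm:sol_L2ROM} with $\eta_{Ritz} = u - \Pi_r^X u$ and the appropriate choices of $Y$ in \Cref{lem:POD_DQ1_error}, \Cref{thm:POD_DQ1_pointwise_error}, and \Cref{cor:totalbounds}. Your added bookkeeping --- that $R_r = \Pi_r^X$ forces $\varphi_i - R_r\varphi_i = \varphi_i$ for $i > r$ and $\|\varphi_i\|_{H_0^1} = 1$, which produces the $\|\varphi_i\|_{L^2}^2$ and ``$+1$'' terms --- is exactly the step the paper leaves implicit.
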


\begin{proof}
	Proceed as in the proofs of \Cref{thm:L2_L2ROM}, \Cref{thm:H1_L2ROM}, and \Cref{thm:sol_L2ROM} respectively, but now use $\eta_{Ritz}= u - \Pi_r^X u$ and the appropriate choices for $Y$ in \Cref{thm:POD_DQ1_pointwise_error}, \Cref{cor:totalbounds}, and \Cref{lem:POD_DQ1_error}.
\end{proof}

\subsection{Optimality}\label{ssec:optimality}
In this section we investigate the optimality of this new approach to POD with difference quotients. To do so we follow the approach given in \cite{Koc2020} but modify the optimality definitions given there to include the ROM error for the initial condition. For this work we focus on both of the POD ROM discretization errors and assume the time and spatial discretization errors are optimal. Thus, we ignore the latter errors in the equation given below. In this section we consider only the new approach as the standard difference quotient approach is discussed in \cite{Koc2020}. The optimality of each error depends on both the POD space $X$ and the error norm space $Y$. To give a precise definition of pointwise ROM error optimality, assume there exists a constant $ C > 0 $ that is independent of the discretization parameters such that the ROM errors $ e^k = u^k - u^k_r $ for $ k = 1, \ldots, N $ satisfy
\begin{equation}
\max_{1 \leq k \leq N} \|e^k\|_Y^2 \leq C\left( \Lambda_r +\Lambda_r^1 \right)
%+ \zeta(\Delta t) + \xi(h) \right)
\end{equation}
where
\begin{itemize}
	\item $\Lambda_r$ is the ROM discretization error, and depends only on $r$, the POD eigenvalues, and the POD modes;
	\item $\Lambda_r^1$ is the ROM discretization error for the initial condition, and depends only on $r$, the POD eigenvalues, and the POD modes.
%	\item $\zeta(\Delta t)$ is an optimal time discretization error; and 
%	\item $\xi(h)$ is an optimal spatial discretization error. 
\end{itemize}

Let $X_r \subset X$ be the span of the first $r$ POD modes, and assume $X_r \subset Y$. Let $\Pi_r^X:X \to X$ be the orthogonal projection onto $X_r$ and $\Pi_r^Y: Y \to Y$ be the $Y-$orthogonal projection onto $X_r$. Further let $s$ be the number of positive POD eigenvalues. In \Cref{def:optimality} we extend the definitions of optimality provided in \cite{Koc2020} to include the ROM discretization error for the initial condition. 
\begin{definition}\label{def:optimality}
	We say the total ROM discretization error, $\Lambda_r+ \Lambda_r^1$, is
	\begin{itemize}
%		\item \textbf{truly optimal} if there exists a constant $C$ such that
%		\begin{equation}
%		\Lambda_r+ \Lambda_r^1 \leq C \Lambda_r^*, \; \Lambda_r^* := \max_{k} \|u^k - \Pi_r^Y u^k \|_Y^2,
%		\end{equation}
		\item \textbf{optimal-I} if there exists a constant $C$ such that 
		\begin{equation}
		\Lambda_r+ \Lambda_r^1 \leq C \sum_{i=r+1}^s \lambda_i\|\varphi_i\|_Y^2, %\; \Lambda_r^I := \sum_{i=r+1}^s \lambda_i\|\varphi_i\|_Y^2,
		\end{equation}
		\item \textbf{optimal-II} if there exists a constant $C$ such that 
		\begin{equation}
		\Lambda_r+ \Lambda_r^1 \leq C \sum_{i=r+1}^s \lambda_i\|\varphi_i- \Pi_r^Y\varphi_i\|_Y^2. %\; \Lambda_r^{II} := \sum_{i=r+1}^s \lambda_i\|\varphi_i- \Pi_r^Y\varphi_i\|_Y^2.
		\end{equation}
	\end{itemize}
	The constant $C$ can depend on the solution data and the problem data, but does not depend on any discretization parameter. 
\end{definition}
\begin{remark}
	For a detailed discussion on the optimality types see \cite{Koc2020}. Note that, as shown in \cite[Proposition 4.8]{Koc2020}, optimal-II is stronger than optimal-I, and the two are equivalent if $X = Y$. 
	%Note that the ROM discretization error for the initial condition is not considered there, but is included in this work. 
\end{remark}

First we consider the ROM error from the choice of initial condition by noting $\Lambda_r^1 = \|\phi_r^1\|_Y^2$ in \Cref{lem:initial condition} below.

\begin{lemma}\label{lem:initial condition}
	Let the initial condition be the POD projection of the given initial condition, i.e. $u_r^1 = \Pi_r^X u^1$ so that $\phi_r^1 =u_r^1 - R_r u^1= \Pi_r^X u^1 - R_r u^1$. If $X = L^2(\Omega)$ then	
	\begin{equation}\label{eqn:int_L2_1}
	\|\phi_r^1\|_{L^2}^2 \leq  2\sum_{i=r+1}^{s} \lambda_i + 2\sum_{i=r+1}^{s} \lambda_i \|\varphi_i - R_r \varphi_i \|_{L^2}^2 
	\end{equation}
	and
	\begin{equation}\label{eqn:int_L2_2}
		 \|\phi_r^1\|_{H_0^1}^2 \leq  2\sum_{i=r+1}^{s} \lambda_i \| \varphi_i\|_{H_0^1}^2 +  2\sum_{i=r+1}^{s} \lambda_i \|\varphi_i - R_r \varphi_i \|_{H_0^1}^2.
	\end{equation}
	If $X = H_0^1(\Omega)$, then
	\begin{equation}\label{eqn:int_H10}
	\phi_r^1 = 0
	\end{equation}
	for either choice of space $Y$.
\end{lemma}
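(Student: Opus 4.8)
The plan is to treat the two POD spaces separately, beginning with the easier case $X = H_0^1(\Omega)$. Here I would invoke the observation recorded just above the statement: when the POD space is $H_0^1(\Omega)$, the orthogonal projection satisfies $(\nabla(w - \Pi_r^X w), \nabla v_r)_{L^2} = 0$ for all $v_r \in V_r^h$, which is exactly the defining relation of the Ritz projection $R_r$. Hence $R_r = \Pi_r^X$ on $V^h$, so $\phi_r^1 = \Pi_r^X u^1 - R_r u^1 = 0$ as an element of $V_r^h$. Since the element itself vanishes, \Cref{eqn:int_H10} follows for either choice of $Y$ with no norm estimate needed.

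For $X = L^2(\Omega)$ the key idea is an additive splitting of $\phi_r^1$ into the two projection errors of the single snapshot $u^1$. Writing
\[
\phi_r^1 = \Pi_r^X u^1 - R_r u^1 = (u^1 - R_r u^1) - (u^1 - \Pi_r^X u^1),
\]
I would apply the elementary inequality $\|a - b\|^2 \leq 2\|a\|^2 + 2\|b\|^2$ in whichever norm is being estimated. This reduces each claim to bounding the orthogonal projection error $\|u^1 - \Pi_r^X u^1\|$ and the Ritz projection error $\|u^1 - R_r u^1\|$ of the single snapshot $u^1$ separately.

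To control these single-snapshot errors I would invoke the exact data approximation error formulas of \Cref{lem:POD_DQ1_error} and simply discard the nonnegative difference-quotient terms on their left-hand sides. For \Cref{eqn:int_L2_1}, measuring in the POD norm $L^2 = X$, the first formula of \Cref{lem:POD_DQ1_error} gives $\|u^1 - \Pi_r^X u^1\|_{L^2}^2 \le \sum_{i=r+1}^s \lambda_i^{DQ1}$, and its third formula with $\pi_r = R_r$, $Y = L^2$ gives $\|u^1 - R_r u^1\|_{L^2}^2 \le \sum_{i=r+1}^s \lambda_i^{DQ1}\|\varphi_i - R_r\varphi_i\|_{L^2}^2$; combining through the splitting produces the stated bound. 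For \Cref{eqn:int_L2_2} I would repeat the same steps in the $H_0^1$ norm, now using the second formula of \Cref{lem:POD_DQ1_error} (with $Y = H_0^1$) for the orthogonal projection term and the third formula (with $Y = H_0^1$, $\pi_r = R_r$) for the Ritz term.

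The one point requiring care, rather than a genuine obstacle, is that I must use the exact error formulas of \Cref{lem:POD_DQ1_error} and not the pointwise maximum bounds of \Cref{thm:POD_DQ1_pointwise_error}. Because $u^1$ is precisely the regular-snapshot term appearing on the left of the formulas in \Cref{lem:POD_DQ1_error}, its projection errors are controlled directly with constant $1$, so the splitting leaves the clean factor $2$; the pointwise bounds would instead introduce the extra factor $C = 2\max\{T,1\}$ and weaken the result. Aside from this, the argument is routine. (Here $\lambda_i$ in the statement abbreviates the new-approach POD eigenvalues $\lambda_i^{DQ1}$.)
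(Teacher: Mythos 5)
Your proposal is correct and follows essentially the same route as the paper's proof: split $\phi_r^1 = \Pi_r^X u^1 - R_r u^1$ into the two projection errors of $u^1$, apply $\|a-b\|^2 \leq 2\|a\|^2 + 2\|b\|^2$, bound each term by dropping the nonnegative difference-quotient sums in the exact formulas of \Cref{lem:POD_DQ1_error}, and note $\Pi_r^X = R_r$ when $X = H_0^1(\Omega)$. Your remark about using the exact formulas of \Cref{lem:POD_DQ1_error} rather than the pointwise bounds of \Cref{thm:POD_DQ1_pointwise_error} (to keep the clean factor $2$) is a correct reading of what the paper does implicitly.
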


\begin{proof}
	First consider the case $X = L^2(\Omega)$. We have
	\begin{align*}
	\|\phi_r^1\|_{Y}^2 &= \|u_r^1 - R_r u^1 \|_{Y}^2 \\
	& = \|\Pi_r^X u^1 - R_r u^1\|_{Y}^2 \\
	& \leq 2\|\Pi_r^X u^1 - u^1\|_{Y}^2 + 2\|u^1 - R_r u^1\|_{L^2}^2.
	\end{align*}
	Using \Cref{lem:POD_DQ1_error} with $Y = X = L^2(\Omega)$ and $\pi_r = R_r$ gives \Cref{eqn:int_L2_1} and with $Y = H_0^1(\Omega)$ and $\pi_r = R_r$ gives \Cref{eqn:int_L2_2}. Finally if $X = H_0^1(\Omega)$, then $\Pi_r^X = R_r$ and \Cref{eqn:int_H10} follows by definition of $\pi_r$.
\end{proof}

This lemma allows us to investigate the total ROM discretization error in the following two theorems with the POD space taken to be $L^2$ and $H_0^1$ respectively.

\begin{theorem}\label{thm:L2_opt} If the $L^2$ POD basis is used, i.e. $X = L^2(\Omega)$, then the following hold:
	\begin{itemize}
		\item The pointwise ROM error in \Cref{equ:L2_L2ROM} with the error norm $Y = L^2$ is optimal-I if there exists a constant $C$ such that 
		\begin{equation}\label{equ:constantbound_L2}
		\|\varphi_i - R_r \varphi_i \|_{L^2} \leq C
		\end{equation}
		for $r+1 \leq i \leq s$. Note that in this case optimal-I is equivalent to optimal-II.
		\item The pointwise ROM error in \Cref{eqn:H1_L2ROM} with error norm $Y = H_0^1$ is optimal-I.
	\end{itemize} 
\end{theorem}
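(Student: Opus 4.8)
The plan is to read off the two ROM error bounds---\Cref{equ:L2_L2ROM} and \Cref{eqn:H1_L2ROM}---in the notation of \Cref{def:optimality}, identifying the bracketed quantity in each as $\Lambda_r + \Lambda_r^1$ with $\Lambda_r^1 = \|\phi_r^1\|_Y^2$, and then bounding each of $\Lambda_r$ and $\Lambda_r^1$ by a constant times $\sum_{i=r+1}^s \lambda_i^{DQ1} \|\varphi_i\|_Y^2$. Throughout I would use that when $X = L^2(\Omega)$ the POD modes are $L^2$-orthonormal, so $\|\varphi_i\|_{L^2} = 1$ and the target optimal-I quantity for $Y = L^2$ reduces to a multiple of $\sum_{i=r+1}^s \lambda_i^{DQ1}$.

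For the first bullet ($Y = L^2$), the hypothesis $\|\varphi_i - R_r \varphi_i\|_{L^2} \leq C$ for $r+1 \leq i \leq s$ immediately gives $\Lambda_r = \sum_{i=r+1}^s \lambda_i^{DQ1} \|\varphi_i - R_r \varphi_i\|_{L^2}^2 \leq C^2 \sum_{i=r+1}^s \lambda_i^{DQ1}$. For $\Lambda_r^1 = \|\phi_r^1\|_{L^2}^2$ I would apply \Cref{eqn:int_L2_1} from \Cref{lem:initial condition} and then the same hypothesis to control its $\|\varphi_i - R_r \varphi_i\|_{L^2}^2$ term, again reducing everything to a multiple of $\sum_{i=r+1}^s \lambda_i^{DQ1}$. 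Summing the two bounds and using $\|\varphi_i\|_{L^2} = 1$ yields optimal-I. The equivalence with optimal-II here is not a separate computation: since $X = Y = L^2(\Omega)$, it follows directly from the remark after \Cref{def:optimality}.

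For the second bullet ($Y = H_0^1$) no extra hypothesis is needed, and the two tools are the Poincar\'e inequality and the best-approximation property of the Ritz projection. Writing $\|\nabla(\varphi_i - R_r \varphi_i)\|_{L^2} = \|\varphi_i - R_r \varphi_i\|_{H_0^1}$ and taking $v_r = 0$ in the defining infimum $\|\varphi_i - R_r \varphi_i\|_{H_0^1} = \inf_{v_r} \|\varphi_i - v_r\|_{H_0^1}$ gives $\|\varphi_i - R_r \varphi_i\|_{H_0^1} \leq \|\varphi_i\|_{H_0^1}$; Poincar\'e then converts the $\|\varphi_i - R_r \varphi_i\|_{L^2}^2$ term appearing in \Cref{eqn:H1_L2ROM} into a multiple of $\|\varphi_i\|_{H_0^1}^2$ as well, so $\Lambda_r \leq C \sum_{i=r+1}^s \lambda_i^{DQ1}\|\varphi_i\|_{H_0^1}^2$. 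For $\Lambda_r^1 = \|\phi_r^1\|_{H_0^1}^2$ I would apply \Cref{eqn:int_L2_2} and again use $\|\varphi_i - R_r \varphi_i\|_{H_0^1} \leq \|\varphi_i\|_{H_0^1}$ on its second sum, giving a bound of the same form. Adding the estimates produces optimal-I with $Y = H_0^1$.

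I do not expect a genuine obstacle: every term is already packaged as a weighted sum over $i = r+1, \ldots, s$, so the argument reduces to bounding each weight $\|\varphi_i - R_r \varphi_i\|$ either by a constant (first bullet, by hypothesis) or by $\|\varphi_i\|_{H_0^1}$ (second bullet, by the Ritz and Poincar\'e estimates). The only point requiring care is the bookkeeping of constants so that the final $C$ stays independent of the discretization parameters, which it does because $C^2$, the Poincar\'e constant, and the numerical factors from \Cref{lem:initial condition} are all discretization-independent.
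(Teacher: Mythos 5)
Your proposal is correct and follows essentially the same route as the paper: for $Y=L^2$ you bound $\Lambda_r$ by the hypothesis \Cref{equ:constantbound_L2} and $\Lambda_r^1$ via \Cref{eqn:int_L2_1} plus the same hypothesis, then invoke $L^2$-orthonormality and the $X=Y$ equivalence of optimal-I and optimal-II; for $Y=H_0^1$ you use Poincar\'e together with $\|\varphi_i - R_r\varphi_i\|_{H_0^1} \leq \|\varphi_i\|_{H_0^1}$ (your best-approximation argument is just a restatement of the paper's appeal to the $H_0^1$-orthogonality of $R_r$) and \Cref{eqn:int_L2_2}. No gaps.
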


\begin{remark}\label{remark_opt}
	In \cite[Theorem 4.10 (iv)]{Koc2020}, when using the $L^2$ POD basis with $Y = H_0^1(\Omega)$ but not considering the initial condition error the ROM error was optimal-II. When including the initial condition as the POD projection of the given initial condition, we obtain the weaker result that the error is optimal-I. Other choices of initial condition will yield different results. Further, the assumption \Cref{equ:constantbound_L2} is discussed in greater detail in \cite[Section 4.2]{Koc2020}. 
\end{remark} 

\begin{proof}
	For the first item
	\begin{align*}
	\Lambda_r +\Lambda_r^1 &= \sum_{i=r+1}^s \lambda_i \|\varphi_i - R_r \varphi_i \|_{L^2}^2 + \|\phi_r^1 \|_{L^2}^2 \\
	& \leq \sum_{i=r+1}^s \lambda_i \|\varphi_i - R_r \varphi_i \|_{L^2}^2 +  2 \left(\sum_{i=r+1}^{s} \lambda_i + \sum_{i=r+1}^{s} \lambda_i \|\varphi_i - R_r \varphi_i \|_{L^2}^2 \right)  \\
	%& \leq  C \left( \sum_{i=r+1}^{s} \lambda_i + \sum_{i=r+1}^{s} \lambda_i \|\varphi_i - R_r \varphi_i \|_{L^2}^2 \right) \\
	%& \leq \sum_{i=r+1}^s \lambda_i \|\varphi_i - R_r \varphi_i \|_{L^2}^2  + C \sum_{i=r+1}^s \lambda_i \\ %\|\varphi_i - R_r \varphi_i \|_{L^2}^2 \\
	& \leq C \sum_{i=r+1}^s \lambda_i\\
	& = C \sum_{i=r+1}^s \lambda_i \|\varphi_i\|_{L^2}^2
	%& \leq C\sum_{i=r+1}^s \lambda_i \|\varphi_i\|_{H_0^1}^2 \\
	%& = C \Lambda_r^I
	\end{align*}
	using \Cref{lem:initial condition}, the assumption in \Cref{equ:constantbound_L2},  \Cref{thm:POD_DQ1_pointwise_error}, and the $L^2$ orthonormality of the POD basis.
%	Note: Since we chose to use the standard POD projection for our initial condition
%	\begin{align*}
%	\|u_r^1 - R_r u^1 \|_{L^2}^2 & = \|\Pi_r^X u^1 - R_r u^1\|_{L^2}^2 \\
%	& \leq \|\Pi_r^X u^1 - u^1\|_{L^2}^2 + \|u^1 - R_r u^1\|_{L^2}^2 \\
%	& \leq C \sum \lambda_i + C \sum \lambda_i \|\varphi_i - R_r \varphi_i \|_{L^2}^2 \\
%	& \leq C \sum \lambda_i
%	\end{align*}

	For the second item use the Poincar\'e inequality to get, 
	\begin{align*}
	\Lambda_r +\Lambda_r^1 %&= \sum_{i=r+1}^{s} \lambda_i (\|\varphi_i-R\varphi_i\|_{L^2}^2 + \|\nabla(\varphi_i-R\varphi_i)\|_{L^2}^2) + \|\phi_r^1\|_{H_0^1}^2\\
	& = \sum_{i=r+1}^{s} \lambda_i \|\varphi_i-R_r\varphi_i\|_{L^2}^2 + \sum_{i=r+1}^{s} \lambda_i \|\nabla(\varphi_i-R_r\varphi_i)\|_{L^2}^2 +  \|\phi_r^1\|_{H_0^1}^2 \\
%	& \leq  C\sum_{i=r+1}^{s} \lambda_i \|\nabla(\varphi_i-R\varphi_i)\|_{L^2}^2 + \| \phi_r^1\|_{H_0^1}^2 \\
	& \leq C\sum_{i=r+1}^{s} \lambda_i \|\varphi_i-R_r\varphi_i\|_{H_0^1}^2 + \| \phi_r^1\|_{H_0^1}^2.
	\end{align*}
%	Now we consider the $\| \phi_r^1\|_{H_0^1}^2$ term. 
%	\begin{align*}
%	\| \phi_r^1\|_{H_0^1}^2 &= \|u_r^1 - R_r u^1\|_{H_0^1}^2 \\
%	& \leq \|u_r^1 - u^1 \|_{H_0^1}^2 + \|u^1 - R_r u^1 \|_{H_0^1}^2 \\
%	& =  \|\Pi_r^X u^1 - u^1 \|_{H_0^1}^2 + \|u^1 - R_r u^1 \|_{H_0^1}^2 \\
%	& \leq C \sum_{i=r+1}^{s} \lambda_i \| \varphi_i\|_{H_0^1}^2 + C \sum \lambda_i \|\varphi_i - R_r \varphi_i \|_{H_0^1}^2
%	\end{align*}
	\Cref{lem:initial condition} and the orthogonality of the Ritz projection $R_r : H^1_0 \to H^1_0$ then yields
%	\begin{align*}
%	\Lambda_r +\Lambda_r^1 & \leq C \sum_{i=r+1}^s \lambda_i \|\varphi_i - R_r \varphi_i \|_{H_0^1}^2 + C \sum_{i=r+1}^{s} \lambda_i \| \varphi_i\|_{H_0^1}^2 \\
%	& \leq C \sum_{i=r+1}^s \lambda_i \| \varphi_i\|_{H_0^1}^2
%	\end{align*}
	\begin{equation*}
	\Lambda_r +\Lambda_r^1 \leq C \sum_{i=r+1}^s \lambda_i \|\varphi_i - R_r \varphi_i \|_{H_0^1}^2 + C \sum_{i=r+1}^{s} \lambda_i \| \varphi_i\|_{H_0^1}^2  \leq C \sum_{i=r+1}^s \lambda_i \| \varphi_i\|_{H_0^1}^2.
	\end{equation*}
	Thus the error given in \Cref{eqn:H1_L2ROM} is optimal-I. 
\end{proof}

\begin{theorem}\label{thm:H10_opt} If the $H_0^1$ POD basis is used, i.e. $X = H_0^1$, then the following results hold. 
	\begin{itemize}
		\item The pointwise ROM error in \Cref{eqn:L2_H1ROM} with the error norm $Y = L^2$ is optimal-I.  
		\item The pointwise ROM error in \Cref{eqn:L2_H1ROM} with error norm $Y = L^2$ is also optimal-II when 
		\begin{equation*}
		\| \varphi_i\|_{Y} \leq C \|\varphi_i - \Pi_r^Y \varphi_i \|_Y \quad  \mbox{for $ r+1 \leq i \leq s $.}
		\end{equation*} 
		\item The pointwise ROM error in \Cref{eqn:H1_H1ROM} with the error norm $Y = H_0^1$ is optimal-I. Note that in this case optimal-I is equivalent to optimal-II.
	\end{itemize}
\end{theorem}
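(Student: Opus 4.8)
The plan is to read off each of the three claims directly from the corresponding ROM bound, after substituting the two structural simplifications that the choice $X = H_0^1(\Omega)$ provides. The first is that the POD modes $\{\varphi_i\}$ are orthonormal in $H_0^1(\Omega)$, so $\|\varphi_i\|_{H_0^1}^2 = 1$ for every $i$. The second is that \Cref{lem:initial condition} gives $\phi_r^1 = 0$ in this setting (the $H_0^1$ orthogonal projection coincides with the Ritz projection), so the initial-condition contribution vanishes: $\Lambda_r^1 = \|\phi_r^1\|_Y^2 = 0$ for both $Y = L^2(\Omega)$ and $Y = H_0^1(\Omega)$. With $\Lambda_r^1$ removed, each claim reduces to comparing a single sum against the relevant optimality benchmark from \Cref{def:optimality} (where the POD eigenvalues are the $\lambda_i^{DQ1}$).

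For the first two items I would start from \Cref{eqn:L2_H1ROM}, which identifies $\Lambda_r = \sum_{i=r+1}^s \lambda_i^{DQ1}\|\varphi_i\|_{L^2}^2$ for $Y = L^2(\Omega)$. Since $\Lambda_r^1 = 0$, the total error $\Lambda_r + \Lambda_r^1$ is exactly $\sum_{i=r+1}^s \lambda_i^{DQ1}\|\varphi_i\|_{L^2}^2$, which coincides with the optimal-I benchmark $\sum_{i=r+1}^s \lambda_i^{DQ1}\|\varphi_i\|_Y^2$ at $Y = L^2$; thus optimal-I holds with $C = 1$. For the second item I would invoke the stated hypothesis $\|\varphi_i\|_{L^2} \leq C\|\varphi_i - \Pi_r^Y\varphi_i\|_{L^2}$, square it, and substitute term by term to bound the same sum by $C^2\sum_{i=r+1}^s \lambda_i^{DQ1}\|\varphi_i - \Pi_r^Y\varphi_i\|_{L^2}^2$, which is the optimal-II benchmark.

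For the third item I would start from \Cref{eqn:H1_H1ROM} with $Y = H_0^1(\Omega)$, where $\Lambda_r = \sum_{i=r+1}^s \lambda_i^{DQ1}(\|\varphi_i\|_{L^2}^2 + 1)$ and again $\Lambda_r^1 = 0$. The key estimate is the Poincar\'e inequality $\|\varphi_i\|_{L^2} \leq C_P\|\nabla\varphi_i\|_{L^2} = C_P\|\varphi_i\|_{H_0^1} = C_P$, which relies on the $H_0^1$ normalization of the modes and bounds the factor $\|\varphi_i\|_{L^2}^2 + 1$ by the constant $C_P^2 + 1$ uniformly in $i$. Hence $\Lambda_r + \Lambda_r^1 \leq (C_P^2+1)\sum_{i=r+1}^s \lambda_i^{DQ1} = (C_P^2+1)\sum_{i=r+1}^s \lambda_i^{DQ1}\|\varphi_i\|_{H_0^1}^2$, the optimal-I benchmark for $Y = H_0^1$. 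The equivalence of optimal-I and optimal-II asserted here is then immediate from the remark following \Cref{def:optimality}, since $X = Y = H_0^1(\Omega)$.

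I expect no deep obstacle: every step is a direct substitution or an application of a previously established result, so the points requiring care are purely bookkeeping ones — confirming that \Cref{lem:initial condition} indeed forces $\phi_r^1 = 0$ for both error norms, and correctly using $\|\varphi_i\|_{H_0^1} = 1$ so that the Poincar\'e bound collapses the weight $\|\varphi_i\|_{L^2}^2 + 1$ to a constant. The only genuine hypothesis-dependence is in the second item, where optimal-II needs the extra assumption relating $\|\varphi_i\|_{L^2}$ to $\|\varphi_i - \Pi_r^Y\varphi_i\|_{L^2}$; without it only optimal-I is available, mirroring the situation in \Cref{thm:L2_opt}.
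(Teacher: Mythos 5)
Your proposal is correct and takes essentially the same approach as the paper, which omits the details and notes only that the argument mirrors \cite[Theorem 4.10]{Koc2020} once \Cref{lem:initial condition} gives $\phi_r^1 = 0$. Your write-up supplies exactly those details: the vanishing initial-condition term, the direct identification of \Cref{eqn:L2_H1ROM} with the optimal-I benchmark, the term-by-term use of the stated hypothesis for optimal-II, and the Poincar\'e inequality together with $\|\varphi_i\|_{H_0^1} = 1$ for the $Y = H_0^1(\Omega)$ case.
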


The proof of this result is similar to that of \cite[Theorem 4.10]{Koc2020} since from \Cref{lem:initial condition} we know $\phi_r^1 = 0$ for both $Y = L^2$ and $Y = H_0^1$.  We omit the details. We note that the optimality results for the new POD DQ approach in \Cref{thm:L2_opt} and \Cref{thm:H10_opt} are very similar to \cite[Theorem 4.10]{Koc2020} for the standard POD DQ approach. In fact, the only fundamental difference in the results here was caused by our choice of the initial condition and including this in the optimality definition, as discussed in \Cref{remark_opt}.

%\begin{proof}
%	From \Cref{lem:initial condition} we know $\phi_r^1 = 0$ for both $Y = L^2$ and $Y = H_0^1$. Then proofs are the same as in the previous paper that did not consider the error from the initial condition. 
%\end{proof}

%These results differ slightly from those presented in \cite{Koc2020} because of the inclusion of the initial condition. Other choices of initial condition will yield different results. 

\section{Numerical Results}\label{sec:rom_computations}

We now turn our attention to computational results. In this section we use the same test problem as in \Cref{sec:some_comp_1} to compute ROM solution errors in order to compare the new POD approach to the existing standard approaches.  We also compute the scaling factors present in the bounds of the theoretical results from \Cref{thm:POD_DQ1_pointwise_error} and \Cref{sec:ROM}. 

\subsection{ROM Comparisons for the Three POD Approaches}\label{sec:rom_computations_comp} 
First we find the ROM errors for the new POD DQ approach at the final time, i.e., the errors at $T = 1$. We compute both the $L^2$ norm error and the $H^1_0(\Omega)$ norm error. For this computation we use 100 finite element nodes, 100 time steps and 3 different values for $r$. Note that in all tables below we report the square of the norms for consistency with previous results. For comparison we also show the final time errors for the standard POD approach and the standard DQ POD approach. First we consider the case where the POD space is taken to be $X = L^2(\Omega)$. The results for the $L^2$ and $H_0^1$ errors can be found in \Cref{tab:L2_errors} and \Cref{tab:H10_errors} respectively. In \Cref{tab:solnorm_errors_H10}, we compute the solution norm squared errors for this reduced order model using the same computation parameters. This solution norm error at the final time is given by \Cref{eqn:solution_norm_error}:
\begin{equation}\label{eqn:solution_norm_error}
 \|e^{N}\|_{L^2}^2 +\Delta t \sum_{n=1}^{N-1} \|\nabla e^{n+1}\|_{L^2}^2.
\end{equation}

\begin{table}
	\renewcommand{\arraystretch}{1.25}
	\begin{center}	
		\begin{tabular}{|c|c|c|c|c|}
			\hline
			POD Space & $r$ value & Standard POD & Standard DQ POD & New DQ POD \\
			\hline
			$L^2$ & 4 & 1.639e-08 & 1.206e-07 & 1.141e-07 \\
			& 6 & 1.257e-10 & 1.148e-09 & 1.090e-09 \\
			& 8 & 9.486e-13 & 9.144e-12 & 8.722e-12 \\
			\hline
			$H_0^1$ & 4 & 1.851e-08 & 1.350e-07 & 1.276e-07 \\
			& 6 & 1.324e-10 & 1.218e-09 & 1.156e-09 \\
			& 8 & 9.999e-13 & 9.778e-12 & 9.323e-12 \\
			\hline
		\end{tabular}
		\caption{ROM errors for the $L^2$ Norm at the final time \label{tab:L2_errors}}
	\end{center}
\end{table}

\begin{table}
	\renewcommand{\arraystretch}{1.25}
	\begin{center}	
		\begin{tabular}{|c|c|c|c|c|}
			\hline
			POD Space & $r$ value & Standard POD & Standard DQ POD & New DQ POD \\
			\hline
			$L^2$ & 4 & 1.936e-07 & 1.340e-06 & 1.269e-06 \\
			& 6 & 2.615e-09 & 2.364e-08 & 2.245e-08 \\
			& 8 & 2.264e-11 & 2.123e-10 & 2.024e-10 \\
			\hline
			$H_0^1$ & 4 & 2.171e-07 & 1.494e-06 & 1.413e-06 \\
			& 6 & 2.755e-09 & 2.506e-08 & 2.379e-08 \\
			& 8 & 2.396e-11 & 2.263e-10 & 2.157e-10 \\
			\hline
		\end{tabular}
		\caption{ROM errors for the $H_0^1$ Norm at the final time \label{tab:H10_errors}}
	\end{center}
\end{table}

\begin{table}
	\renewcommand{\arraystretch}{1.25}
	\begin{center}	
		\begin{tabular}{|c|c|c|c|c|}
			\hline
			POD Space & $r$ value & Standard POD & Standard DQ POD & New DQ POD \\
			\hline
			$L^2$ & 4 & 1.172e-07 & 2.441e-06 & 2.008e-06 \\
			& 6 & 1.472e-11 & 4.373e-10 & 3.576e-10 \\
			& 8 & 7.032e-16 & 2.227e-14 & 1.810e-14 \\
			\hline
			$H_0^1$ & 4 & 1.135e-07 & 3.049e-06 & 2.525e-06 \\
			& 6 & 1.467e-11 & 4.861e-10 & 3.996e-10 \\
			& 8 & 6.984e-16 & 2.479e-14 & 2.032e-14 \\
			\hline
		\end{tabular}
		\caption{ROM solution norm error \Cref{eqn:solution_norm_error} \label{tab:solnorm_errors_H10}}
	\end{center}
\end{table}

We also wish to compare the different POD approaches when we take the POD space to be $H_0^1(\Omega)$, i.e., $X = H_0^1(\Omega)$. All other parameters of the computation stay the same and the results can also be found in \Cref{tab:L2_errors}, \Cref{tab:H10_errors}, and \Cref{tab:solnorm_errors_H10}. For both cases of the chosen POD space the errors behave similarly in all three approaches but are particularly close in the two approaches that utilize the difference quotients. 

%\begin{table}
%	\begin{center}	
%		\begin{tabular}{|c|c|c|c|}
%			\hline
%			$r$ value & Standard POD & Standard DQ POD & New DQ POD \\
%			\hline
%			4 & 1.851e-08 & 1.350e-07 & 1.276e-07 \\
%			6 & 1.324e-10 & 1.218e-09 & 1.156e-09 \\
%			8 & 9.999e-13 & 9.778e-12 & 9.323e-12 \\
%			\hline
%		\end{tabular}
%		\caption{ROM errors for the $L^2$ Norm at the final time with POD space $H_0^1$ \label{tab:L2_errors_H10}}
%	\end{center}
%\end{table}

%\begin{table}
%	\begin{center}	
%		\begin{tabular}{|c|c|c|c|}
%			\hline
%			$r$ value & Standard POD & Standard DQ POD & New DQ POD \\
%			\hline
%			4 & 2.171e-07 & 1.494e-06 & 1.413e-06 \\
%			6 & 2.755e-09 & 2.506e-08 & 2.379e-08 \\
%			8 & 2.396e-11 & 2.263e-10 & 2.157e-10 \\
%			\hline
%		\end{tabular}
%		\caption{ROM errors for the $H_0^1$ Norm at the final time with POD space $H_0^1$\label{tab:H10_errors_H10}}
%	\end{center}
%\end{table}

%\begin{table}
%	\begin{center}	
%		\begin{tabular}{|c|c|c|c|}
%			\hline
%			$r$ value & Standard POD & Standard DQ POD & New DQ POD \\
%			\hline
%			4 & 1.135e-07 & 3.049e-06 & 2.525e-06 \\
%			6 & 1.467e-11 & 4.861e-10 & 3.996e-10 \\
%			8 & 6.984e-16 & 2.479e-14 & 2.032e-14 \\
%			\hline
%		\end{tabular}
%		\caption{Solution Norm Error: $L^2(0,T,H_0^1(\Omega))$ with POD space $H_0^1$ \label{tab:solnorm_errors_H10_H10}}
%	\end{center}
%\end{table}

\subsection{Pointwise Error Bounds}\label{ssec:Ratios_Comp_PW}

We want to compute the ratios generated by the theoretical results in \Cref{sec:PointwiseErrorBounds}. For these results we vary the number of time steps, while keeping everything else constant. This allows us to verify that the scaling factors are not dependent on the time step chosen. We performed similar experiments by varying other parameters and obtained similar results. We use 100 finite element nodes and an $r$ value of 4. Both $X = L^2(\Omega)$ and $X = H_0^1(\Omega)$ are considered in this section. We use the data generated by the finite element method as described in \Cref{sec:some_comp_1} and compute the POD modes and POD singular values.

First, we consider the pointwise error bounds as in \Cref{thm:POD_DQ1_pointwise_error}. The tables show the projection and the norm space $Y$ that is used for each of the computations. Note that for these we have $\pi_r = R_r$ and either $Y = L^2(\Omega)$ or $Y = H_0^1(\Omega)$. For example, for the second result in \Cref{thm:POD_DQ1_pointwise_error} with $X = L^2(\Omega)$ and $Y = H_0^1(\Omega)$ we have
\begin{equation*}
\mathrm{scaling \, factor} = \left( \max_j \|u^j - \Pi_r^X u^j\|_Y^2 \right)/ \left( \sum_{i=r+1}^{s}\lambda_i^{DQ1} \|\varphi_i\|_Y^2 \right).
\end{equation*}
%
%\begin{equation}
%C_{XY} = 
%\end{equation}
%
The results for $X = L^2(\Omega)$ and $X = H_0^1(\Omega)$ are given in \Cref{tab:ratios_pointwise_errors_L2} and \Cref{tab:ratios_pointwise_errors_H10} respectively. We present the results with fixed values of $r=4$ and $100$ finite element nodes and varying values of $\Delta t$. Theoretically, we showed that the scaling factor should always be less that or equal to $2$ (since $T=1$ here). The computational results show that the scaling factor can be much less than that value. 

\begin{table}
	\renewcommand{\arraystretch}{1.25}
	\begin{center}	
		\begin{tabular}{|c|c|c|c|c|c|c|}
			\hline
			Projection & $Y$ & $1/40$ & $1/50$ & $1/100$ & $1/200$ & $1/300$ \\
			\hline
			$\Pi_r^X$ & $L^2(\Omega)$ & 6.0e-02 & 5.4e-02 & 4.0e-02 & 3.2e-02 & 2.9e-02   \\
			$\Pi_r^X$ & $H_0^1(\Omega)$ & 5.6e-02 & 4.9e-02 & 3.3e-02 & 2.3e-02 & 1.8e-02   \\
			$R_r$ & $L^2(\Omega)$ & 5.8e-02 & 5.2e-02 & 3.7e-02 & 2.9e-02 & 2.5e-02   \\
			$R_r$ & $H_0^1(\Omega)$ & 5.5e-02 & 4.9e-02 & 3.2e-02 & 2.1e-02 & 1.6e-02   \\
			\hline
		\end{tabular}
		\caption{Scaling factors for \Cref{thm:POD_DQ1_pointwise_error} as $\Delta t $ changes with $X = L^2(\Omega)$ \label{tab:ratios_pointwise_errors_L2}}
	\end{center}
\end{table}

\begin{table}
	\renewcommand{\arraystretch}{1.25}
	\begin{center}	
		\begin{tabular}{|c|c|c|c|c|c|c|}
			\hline
			Projection & $Y$ & $1/40$ & $1/50$ & $1/100$ & $1/200$ & $1/300$ \\
			\hline
			$\Pi_r^X$ & $L^2(\Omega)$ & 6.5e-02 & 5.8e-02 & 4.2e-02 & 3.1e-02 & 2.6e-02   \\
			$\Pi_r^X$ & $H_0^1(\Omega)$ & 6.8e-02 & 6.2e-02 & 4.8e-02 & 4.0e-02 & 3.6e-02   \\
			$R_r$ & $L^2(\Omega)$ & 6.5e-02 & 5.9e-02 & 4.3e-02 & 3.3e-02 & 2.8e-02   \\
			$R_r$ & $H_0^1(\Omega)$ & 7.0e-02 & 6.4e-02 & 5.1e-02 & 4.4e-02 & 4.1e-02   \\
			\hline
		\end{tabular}
		\caption{Scaling factors for \Cref{thm:POD_DQ1_pointwise_error} as $\Delta t $ changes with $X = H_0^1(\Omega)$ \label{tab:ratios_pointwise_errors_H10}}
	\end{center}
\end{table}

\subsection{ROM Error Bounds}\label{ssec:Ratios_Comp_ROM}
Next we consider the reduced order model error bounds from \Cref{sec:ROM}. Again we use 100 finite element nodes and $r=4$ with varying values of $\Delta t$. First define the following values:
\begin{equation*}
\mathrm{err}_1 = \max_k \|e^k\|^2_{L^2},
\end{equation*}
\begin{equation*}
\mathrm{err}_2 = \max_k \|e^k\|^2_{H_0^1},
\end{equation*}
and 
\begin{equation*}
\mathrm{err}_3 =   \|e^{N}\|_{L^2}^2 +\Delta t \sum_{n=1}^{N-1} \|\nabla e^{n+1}\|_{L^2}^2 .
\end{equation*}
For the first set of scaling factors defined in \Cref{eqn:ratio1}, \Cref{eqn:ratio2}, and \Cref{eqn:ratio3} below we have $X = L^2(\Omega)$. We also compute the scaling factors for the results that use $X = H_0^1(\Omega)$ as the POD space. These scaling factors for the $X = H_0^1(\Omega)$ case are given by \Cref{eqn:ratio4,eqn:ratio5,eqn:ratio6}. For these computations we once again vary the time steps and keep all other parameters constant. The results for both cases of POD basis space can be found in \Cref{tab:ratios_time}.
%The results are shown in \Cref{tab:ratios_L2_time}.

\begin{equation}\label{eqn:ratio1}
C_1 = \mathrm{err}_1  / \left( \sum_{i=r+1}^s \lambda_i^{DQ1} \|\varphi_i - R_r \varphi_i \|_{L^2}^2 + \|\phi_r^1 \|_{L^2}^2 \right) 
\end{equation}

\begin{equation}\label{eqn:ratio2}
C_2 = \mathrm{err}_2 / \left( \sum_{i=r+1}^{s} \lambda_i^{DQ1} \left( \|\varphi_i-R_r\varphi_i\|_{L^2}^2 + \|\nabla (\varphi_i-R_r\varphi_i)\|_{L^2}^2  \right) + \|\phi_r^1\|_{H_0^1}^2 \right)
\end{equation}

\begin{equation}\label{eqn:ratio3}
C_3 = \mathrm{err}_3 / \left( \sum_{i=r+1}^{s} \lambda^{DQ1} (\|\varphi_i-R_r\varphi_i\|_{L^2}^2 + \|\nabla(\varphi_i-R_r\varphi_i)\|_{L^2}^2) + \|\phi_r^1\|_{L^2}^2\right)
\end{equation}

%\begin{table}
%	\begin{center}	
%		\begin{tabular}{|c|c|c|c|c|c|}
%			\hline
%			$\Delta t$ & $1/40$ & $1/50$ & $1/100$ & $1/200$ & $1/300$ \\
%			\hline
%			$C_1$ & 7.3e-03 & 6.4e-03 & 4.4e-03 & 3.3e-03 & 2.9e-03   \\
%			$C_2$ & 2.7e-06 & 4.0e-06 & 9.4e-06 & 1.4e-05 & 1.6e-05   \\			
%			$C_3$ & 8.0e-08 & 9.7e-08 & 1.3e-07 & 1.3e-07 & 1.1e-07   \\
%			\hline
%		\end{tabular}
%		\caption{Scaling factors for the $L^2$ POD basis case as $\Delta t $ changes \label{tab:ratios_L2_time}}
%	\end{center}
%\end{table}

\begin{table}
	\renewcommand{\arraystretch}{1.25}
	\begin{center}	
		\begin{tabular}{|c|c|c|c|c|c|}
			\hline
			$\Delta t$ & $1/40$ & $1/50$ & $1/100$ & $1/200$ & $1/300$ \\
			\hline 
			$C_1$ & 5.8e-02 & 5.1e-02 & 3.7e-02 & 2.8e-02 & 2.5e-02   \\
			$C_2$ & 1.0e-05 & 1.4e-05 & 2.9e-05 & 4.0e-05 & 4.3e-05   \\			
			$C_3$ & 1.9e-06 & 2.4e-06 & 4.1e-05 & 5.2e-06 & 5.3e-06   \\
			$C_4$ & 6.7e-02 & 6.1-e02 & 4.7e-02 & 4.0e-02 & 3.6e-02  \\
			$C_5$ & 6.5e-02 & 5.8e-02 & 4.1e-02 & 3.0e-02 & 2.6e-02   \\
			$C_6$ & 1.1e-02 & 9.3e-03 & 5.7e-03 & 3.8e-03 & 3.1e-03   \\			
			\hline
		\end{tabular}
		\caption{Scaling factors as $\Delta t $ changes for ROM errors \label{tab:ratios_time}}
	\end{center}
\end{table}

%We also compute the scaling factors for the results that use $X = H_0^1$ as the POD space. These scaling factors are given by \Cref{eqn:ratio4,eqn:ratio5,eqn:ratio6} and the computational results can be found in \Cref{tab:ratios_H10_time}.

\begin{equation}\label{eqn:ratio4}
C_4 = \mathrm{err}_1 /\left( \sum_{i=r+1}^s \lambda_i^{DQ1} \|\varphi_i \|_{L^2}^2 + \|\phi_r^1 \|_{L^2}^2 \right)
\end{equation}
\begin{equation}\label{eqn:ratio5}
C_5 = \mathrm{err}_2 /\left( \sum_{i=r+1}^{s} \lambda_i^{DQ1} (\|\varphi_i\|_{L^2}^2 + 1) +\|\phi_r^1\|_{H_0^1}^2 \right)
\end{equation}
\begin{equation}\label{eqn:ratio6}
C_6 = \mathrm{err}_3 /\left( \sum_{i=r+1}^{s} \lambda_i^{DQ1} (1 + \|\varphi_i\|_{L^2}^2) + \|\phi_r^1\|_{L^2}^2 \right)
\end{equation}

%\begin{table}
%	\begin{center}	
%		\begin{tabular}{|c|c|c|c|c|c|}
%			\hline
%			$\Delta t$ & $1/40$ & $1/50$ & $1/100$ & $1/200$ & $1/300$ \\
%			\hline
%			$C_4$ & 2.7e-02 & 2.4e-02 & 1.8e-02 & 1.4e-02 & 2.3e-02   \\
%			$C_5$ & 5.6e-02 & 5.2e-02 & 4.3e-02 & 3.4e-02 & 3.0e-02   \\
%			$C_6$ & 1.0e-02 & 8.9e-03 & 6.2e-03 & 4.5e-03 & 3.8e-03   \\
%			\hline
%		\end{tabular}
%		\caption{Scaling factors for the $H_0^1$ POD basis case as $\Delta t $ changes \label{tab:ratios_H10_time}}
%	\end{center}
%\end{table}

Note that changing the number of finite element nodes and keeping the number of time steps constant yields similar results.   Theoretically we showed the scaling factors should remain bounded, and these computational results support that. For the computations in \Cref{ssec:Ratios_Comp_PW} and \Cref{ssec:Ratios_Comp_ROM}, we note that for larger values of $r$ some of the projection errors, ROM errors, and error formulas become extremely small. In such cases some of the computed scaling factors are very large, but we believe this is caused by round off errors.

%We did observe that some of scaling factors became large for certain cases of large values of $r$ paired with large values of $\Delta t$. However, these specific values of $r$ and $\Delta t$ were much larger than is typically used in applications, and we believe that the scaling factors becoming large for these values is due to round off errors.

%\red{These behave as expected most of the time. Issues arise with the constant when $r$ is larger and the number of timesteps is smaller ($\Delta t$ is larger). In particular, when the number of timesteps is as low as $60$, the constant is increasing by $r=8$ and blows up by $r=10$. }

\section{Conclusions}

In this work, we introduce a new approach to POD using the difference quotients of the snapshot data. Specifically, we derive the POD modes from a data set that includes only the first snapshot and the regular difference quotients. This data set has approximately half the number of snapshots as the standard POD with DQ approach; also, this data set does not include redundant data when the snapshots are linearly independent. For this new approach to POD with DQs, we prove an approximation result for the weighted sum of the POD projection errors of the regular snapshots, and we also prove that we retain all of the numerical analysis benefits of using DQs that were shown in \cite{Koc2020} for the standard POD with DQ approach. Our numerical experiments for a heat equation test problem show that this new approach produces similar reduced order model errors to other known POD approaches.

%We introduce a new computational approach to proper orthogonal decomposition that includes all difference quotients and one data snapshot. We show optimality results with the initial condition taken to be the POD projection of the initial data. New error formulas are shown and preliminary computations suggest that this approach produces similar errors to other known POD approaches. 

This work focuses on numerical analysis results concerning pointwise POD projection errors and the optimality of pointwise in time ROM errors for our new approach to POD with DQs. Further investigation is needed to determine how this new approach compares to other standard POD methods in practical computations. The size of the ROM errors is not considered here and more research is needed to compare the size of the ROM error in this case with those of standard POD and standard DQ POD.

%This work focuses on the optimality of the ROM errors, but further investigation is needed to determine how this new approach will compare to other POD methods in practical computations. The size of the ROM errors is not considered here and more research is needed to compare the size of the ROM error in this case with those of standard POD and standard DQ POD. 

In this work we consider only one PDE and one choice of difference quotient. We focus on the heat equation and leave the Navier-Stokes equations and other more complicated PDEs to be considered elsewhere. Additionally, in this work we consider the difference quotients obtained when using backward Euler for time stepping. Other difference quotients are possible and have been used for snapshot collection in \cite{Herkt2013, JinZhou17, ZhuDedeQuarteroni17}. It is possible that results in this paper can be extended to these other difference quotients, but we leave that to be considered elsewhere. 

\bibliographystyle{plain}
\bibliography{references_POD_approx,pod_differencequotients,POD_theory_new_references,POD_applications}

\begin{thebibliography}{10}

\bibitem{AllaFalconeVolkwein17}
A.~Alla, M.~Falcone, and S.~Volkwein.
\newblock Error analysis for {POD} approximations of infinite horizon problems
  via the dynamic programming approach.
\newblock {\em SIAM J. Control Optim.}, 55(5):3091--3115, 2017.

\bibitem{Baker2012}
C.~G. Baker, K.~A. Gallivan, and P.~Van~Dooren.
\newblock Low-rank incremental methods for computing dominant singular
  subspaces.
\newblock {\em Linear Algebra and its Applications}, 436(8):2866--2888, 2012.

\bibitem{Bergmann05}
Michel Bergmann, Laurent Cordier, and Jean-Pierre Brancher.
\newblock Optimal rotary control of the cylinder wake using proper orthogonal
  decomposition reduced-order model.
\newblock {\em Physics of Fluids}, 17(9):097101, 2005.

\bibitem{Brand2006}
Matthew Brand.
\newblock Fast low-rank modifications of the thin singular value decomposition.
\newblock {\em Linear Algebra and its Applications}, 415(1):20--30, 2006.

\bibitem{Chapelle12}
Dominique Chapelle, Asven Gariah, and Jacques Sainte-Marie.
\newblock Galerkin approximation with proper orthogonal decomposition: new
  error estimates and illustrative examples.
\newblock {\em ESAIM Math. Model. Numer. Anal.}, 46(4):731--757, 2012.

\bibitem{Djouadi08}
S.~M. Djouadi.
\newblock On the optimality of the proper orthogonal decomposition and balanced
  truncation.
\newblock In {\em Proceedings of the 47th IEEE Conference on Decision and
  Control}, pages 4221 --4226, 2008.

\bibitem{Fareed2020}
Hiba Fareed and John~R. Singler.
\newblock Error analysis of an incremental proper orthogonal decomposition
  algorithm for {PDE} simulation data.
\newblock {\em Journal of Computational and Applied Mathematics}, 368:112525,
  14, 2020.

\bibitem{Fareed18}
Hiba Fareed, John~R. Singler, Yangwen Zhang, and Jiguang Shen.
\newblock Incremental proper orthogonal decomposition for {PDE} simulation
  data.
\newblock {\em Computers \& Mathematics with Applications. An International
  Journal}, 75(6):1942--1960, 2018.

\bibitem{GohbergGoldbergKaashoek90}
Israel Gohberg, Seymour Goldberg, and Marinus~A. Kaashoek.
\newblock {\em {C}lasses of {L}inear {O}perators. {V}ol. {I}}, volume~49 of
  {\em Operator Theory: Advances and Applications}.
\newblock Birkh\"auser Verlag, Basel, 1990.

\bibitem{Graessle2019}
Carmen Gr\"{a}{\ss}le, Michael Hinze, Jens Lang, and Sebastian Ullmann.
\newblock P{OD} model order reduction with space-adapted snapshots for
  incompressible flows.
\newblock {\em Advances in Computational Mathematics}, 45(5-6):2401--2428,
  2019.

\bibitem{Gu2021}
Haotian Gu, Jack Xin, and Zhiwen Zhang.
\newblock Error {E}stimates for a {POD} {M}ethod for {S}olving {V}iscous
  {G}-{E}quations in {I}ncompressible {C}ellular {F}lows.
\newblock {\em SIAM Journal on Scientific Computing}, 43(1):A636--A662, 2021.

\bibitem{GubischVolkwein17}
Martin Gubisch and Stefan Volkwein.
\newblock Proper orthogonal decomposition for linear-quadratic optimal control.
\newblock In {\em Model reduction and approximation}, volume~15 of {\em Comput.
  Sci. Eng.}, pages 3--63. SIAM, Philadelphia, PA, 2017.

\bibitem{Herkt2013}
Sabrina Herkt, Michael Hinze, and Rene Pinnau.
\newblock Convergence analysis of {G}alerkin {POD} for linear second order
  evolution equations.
\newblock {\em Electronic Transactions on Numerical Analysis}, 40:321--337,
  2013.

\bibitem{Higham2020}
J.~E. Higham, M.~Shahnam, and A.~Vaidheeswaran.
\newblock Using a proper orthogonal decomposition to elucidate features in
  granular flows.
\newblock {\em Granular Matter}, 22(4), 2020.

\bibitem{Hijazi2020}
Saddam Hijazi, Giovanni Stabile, Andrea Mola, and Gianluigi Rozza.
\newblock Data-driven {POD}-{G}alerkin reduced order model for turbulent flows.
\newblock {\em Journal of Computational Physics}, 416:109513, 30, 2020.

\bibitem{Himpe2018}
Christian Himpe, Tobias Leibner, and Stephan Rave.
\newblock Hierarchical approximate proper orthogonal decomposition.
\newblock {\em SIAM Journal on Scientific Computing}, 40(5):A3267--A3292, 2018.

\bibitem{HinzeVolkwein}
Michael Hinze and Stefan Volkwein.
\newblock Proper orthogonal decomposition surrogate models for nonlinear
  dynamical systems: Error estimates and suboptimal control.
\newblock In {\em Lecture Notes in Computational Science and Engineering},
  pages 261--306. Springer-Verlag, 2005.

\bibitem{HolmesLumleyBerkoozRowley12}
Philip Holmes, John~L. Lumley, Gahl Berkooz, and Clarence~W. Rowley.
\newblock {\em Turbulence, coherent structures, dynamical systems and
  symmetry}.
\newblock Cambridge Monographs on Mechanics. Cambridge University Press,
  Cambridge, second edition, 2012.

\bibitem{Hoemberg2003}
D.~H\"{o}mberg and S.~Volkwein.
\newblock Control of laser surface hardening by a reduced-order approach using
  proper orthogonal decomposition.
\newblock {\em Mathematical and Computer Modelling}, 38(10):1003--1028, 2003.

\bibitem{IliescuWang13}
Traian Iliescu and Zhu Wang.
\newblock Variational multiscale proper orthogonal decomposition:
  convection-dominated convection-diffusion-reaction equations.
\newblock {\em Math. Comp.}, 82(283):1357--1378, 2013.

\bibitem{IliescuWang14_DQ}
Traian Iliescu and Zhu Wang.
\newblock Are the snapshot difference quotients needed in the proper orthogonal
  decomposition?
\newblock {\em SIAM Journal on Scientific Computing}, 36(3):A1221--A1250, 2014.

\bibitem{IliescuWang14}
Traian Iliescu and Zhu Wang.
\newblock Variational multiscale proper orthogonal decomposition:
  {N}avier-{S}tokes equations.
\newblock {\em Numer. Methods Partial Differential Equations}, 30(2):641--663,
  2014.

\bibitem{JinZhou17}
Bangti Jin and Zhi Zhou.
\newblock An analysis of {G}alerkin proper orthogonal decomposition for
  subdiffusion.
\newblock {\em ESAIM. Mathematical Modelling and Numerical Analysis},
  51(1):89--113, 2017.

\bibitem{Karasoezen2018}
B\"{u}lent Karas\"{o}zen and Murat Uzunca.
\newblock Energy preserving model order reduction of the nonlinear
  {S}chr\"{o}dinger equation.
\newblock {\em Advances in Computational Mathematics}, 44(6):1769--1796, 2018.

\bibitem{Kato95}
Tosio Kato.
\newblock {\em Perturbation theory for linear operators}.
\newblock Classics in Mathematics. Springer-Verlag, Berlin, 1995.
\newblock Reprint of the 1980 edition.

\bibitem{KeanSchneier19}
Kiera Kean and Michael Schneier.
\newblock Error analysis of supremizer pressure recovery for {POD} based
  reduced-order models of the time-dependent {N}avier-{S}tokes equations.
\newblock {\em SIAM Journal on Numerical Analysis}, 58(4):2235--2264, 2020.

\bibitem{Koc2019}
Birgul Koc, Muhammad Mohebujjaman, Changhong Mou, and Traian Iliescu.
\newblock Commutation error in reduced order modeling of fluid flows.
\newblock {\em Advances in Computational Mathematics}, 45(5-6):2587--2621,
  2019.

\bibitem{Koc2020}
Birgul Koc, Samuele Rubino, Michael Schneier, John~R. Singler, and Traian
  Iliescu.
\newblock On optimal pointwise in time error bounds and difference quotients
  for the proper orthogonal decomposition.
\newblock {\em SIAM Journal on Numerical Analysis}.
\newblock To appear, arXiv:2010.03750.

\bibitem{Kostova-VassilevskaOxberry18}
Tanya Kostova-Vassilevska and Geoffrey~M. Oxberry.
\newblock Model reduction of dynamical systems by proper orthogonal
  decomposition: error bounds and comparison of methods using snapshots from
  the solution and the time derivatives.
\newblock {\em J. Comput. Appl. Math.}, 330:553--573, 2018.

\bibitem{Kunisch2001}
K.~Kunisch and S.~Volkwein.
\newblock Galerkin proper orthogonal decomposition methods for parabolic
  problems.
\newblock {\em Numerische Mathematik}, 90(1):117--148, 2001.

\bibitem{KunischVolkwein02}
K.~Kunisch and S.~Volkwein.
\newblock Galerkin proper orthogonal decomposition methods for a general
  equation in fluid dynamics.
\newblock {\em SIAM J. Numer. Anal.}, 40(2):492--515, 2002.

\bibitem{Kunisch08}
Karl Kunisch and Stefan Volkwein.
\newblock Proper orthogonal decomposition for optimality systems.
\newblock {\em M2AN Math. Model. Numer. Anal.}, 42(1):1--23, 2008.

\bibitem{Lax02}
Peter~D. Lax.
\newblock {\em Functional analysis}.
\newblock Pure and Applied Mathematics (New York). Wiley-Interscience [John
  Wiley \& Sons], New York, 2002.

\bibitem{Leibfritz2007}
Friedemann Leibfritz and Stefan Volkwein.
\newblock Numerical feedback controller design for {PDE} systems using model
  reduction: techniques and case studies.
\newblock In {\em Real-time {PDE}-constrained optimization}, volume~3 of {\em
  Comput. Sci. Eng.}, pages 53--72. SIAM, Philadelphia, PA, 2007.

\bibitem{Liang02}
Y.~C. Liang, H.~P. Lee, S.~P. Lim, W.~Z. Lin, K.~H. Lee, and C.~G. Wu.
\newblock Proper orthogonal decomposition and its applications. {I}. {T}heory.
\newblock {\em J. Sound Vibration}, 252(3):527--544, 2002.

\bibitem{LockeSingler20}
Sarah Locke and John Singler.
\newblock New proper orthogonal decomposition approximation theory for {PDE}
  solution data.
\newblock {\em SIAM Journal on Numerical Analysis}, 58(6):3251--3285, 2020.

\bibitem{LuoChenNavonYang08}
Zhendong Luo, Jing Chen, I.~M. Navon, and Xiaozhong Yang.
\newblock Mixed finite element formulation and error estimates based on proper
  orthogonal decomposition for the nonstationary {N}avier-{S}tokes equations.
\newblock {\em SIAM J. Numer. Anal.}, 47(1):1--19, 2008/09.

\bibitem{Nguyen17}
Van~Bo Nguyen, H.-S. Dou, K.~Willcox, and Boo-Cheong Khoo.
\newblock Model order reduction for reacting flows: Laminar {G}aussian flame
  applications.
\newblock In {\em 30th International Symposium on Shock Waves 1}, pages
  337--343. Springer International Publishing, 2017.

\bibitem{QuarteroniManzoniNegri16}
Alfio Quarteroni, Andrea Manzoni, and Federico Negri.
\newblock {\em Reduced basis methods for partial differential equations},
  volume~92 of {\em Unitext}.
\newblock Springer, Cham, 2016.

\bibitem{Rathinam2003}
Muruhan Rathinam and Linda~R. Petzold.
\newblock A new look at proper orthogonal decomposition.
\newblock {\em SIAM Journal on Numerical Analysis}, 41(5):1893--1925, 2003.

\bibitem{ReedSimon80}
Michael Reed and Barry Simon.
\newblock {\em Methods of modern mathematical physics {I}: {F}unctional
  analysis}.
\newblock Academic Press, Inc., New York, second edition, 1980.

\bibitem{ROWLEY2005}
C.~W. Rowley.
\newblock Model reduction for fluids, using balanced proper orthogonal
  decomposition.
\newblock {\em Internat. J. Bifur. Chaos Appl. Sci. Engrg.}, 15(3):997--1013,
  2005.

\bibitem{Rubino18}
Samuele Rubino.
\newblock A streamline derivative {POD}-{ROM} for advection-diffusion-reaction
  equations.
\newblock {\em {ESAIM}: Proceedings and Surveys}, 64:121--136, 2018.

\bibitem{Sachs2013}
Ekkehard~W. Sachs and Matthias Schu.
\newblock {\it {A} priori} error estimates for reduced order models in finance.
\newblock {\em ESAIM. Mathematical Modelling and Numerical Analysis},
  47(2):449--469, 2013.

\bibitem{Singler14}
John~R. Singler.
\newblock New {POD} error expressions, error bounds, and asymptotic results for
  reduced order models of parabolic {PDE}s.
\newblock {\em SIAM J. Numer. Anal.}, 52(2):852--876, 2014.

\bibitem{Volkwein04}
S.~Volkwein.
\newblock Interpretation of proper orthogonal decomposition as singular value
  decomposition and {HJB}-based feedback design.
\newblock In {\em Proceedings of the Sixteenth International Symposium on
  Mathematical Theory of Networks and Systems (MTNS)}, 2004.

\bibitem{Willcox02}
K.~Willcox and J.~Peraire.
\newblock Balanced model reduction via the proper orthogonal decomposition.
\newblock {\em {AIAA} Journal}, 40(11):2323--2330, 2002.

\bibitem{ZhuDedeQuarteroni17}
Shengfeng Zhu, Luca Ded\`e, and Alfio Quarteroni.
\newblock Isogeometric analysis and proper orthogonal decomposition for the
  acoustic wave equation.
\newblock {\em ESAIM. Mathematical Modelling and Numerical Analysis},
  51(4):1197--1221, 2017.

\end{thebibliography}

\end{document}